\documentclass[[11pt,a4paper,reqno]{article}

\usepackage{bm}
\usepackage{graphicx}%
\usepackage{multirow}%
\usepackage{amsmath,amssymb,amsfonts}%
\usepackage{amsthm}%
\usepackage{mathrsfs}%
\usepackage[title]{appendix}%
\usepackage{xcolor}%
\usepackage{textcomp}%
\usepackage{manyfoot}%
\usepackage{booktabs}%
\usepackage{listings}%
\usepackage{geometry}

\def\beq{\begin{equation}}
\def\eeq{\end{equation}}

\newcommand{\R}{{\mathbb R}}

\newcommand{\N}{{\mathbb N}}
\newcommand{\C}{{\mathbb C}}

\newtheorem{theorem}{Theorem}[section]
\newtheorem{definition}{Definition}

\newtheorem{lemma}{Lemma}

\newtheorem{remark}{Remark}[section]

\numberwithin{equation}{section}

\begin{document}

\title{Trap behaviors for Brownian motions}

\author{Raffaela Capitanelli\footnote{
raffaela.capitanelli@uniroma1.it} \, \& Mirko D'Ovidio\footnote{mirko.dovidio@uniroma1.it}\\
 \textit{Sapienza University of Rome}}
\maketitle

\begin{abstract}
    \quad This paper investigates the relationship between the geometric properties of a domain and the diffusion dynamics of Brownian motion, with a specific focus on the phenomenon of \lq\lq trapping"  in terms of  the behavior of stochastic processes.

\end{abstract}

\bigskip

{\bf MSC 2020:} 60J60, 60J55, 31C25, 28A80, 35R11

\bigskip

{\bf KEYWORDS:} Brownian motions;  Non-Local operators,  Fractals, Trap domains, Reflecting boundary, Sticky processes.


\section{Introduction}

In this paper, we investigate the interplay between the geometric properties of a  domain $D \subset \mathbb{R}^d$ and the properties  of the heat diffusion on it.

 Specifically, we consider the heat equation $\partial_t u = \Delta u$ subject to classical boundary conditions:$$\begin{aligned}
    (\mathbf{P_D}) \quad & \text{Dirichlet conditions:} \quad u|_{\partial D} = 0, \\
    (\mathbf{P_N}) \quad & \text{Neumann conditions:} \quad \partial_{\mathbf{n}} u|_{\partial D} = 0.
\end{aligned}$$

From a probabilistic point of view, the solution $u(t, x)$ with initial datum $u_0 = 1$ is intrinsically linked to the distribution of the first exit time $\zeta$ of the underlying stochastic process.

Under the Dirichlet setting, the process corresponds to a Brownian motion killed upon hitting the boundary, where $\zeta = \tau_{D}^{\text{exit}}$. Our analysis focuses on the heat content $Q(t) = \int_{D} \mathbf{P}_x(\tau_{D}^{\text{exit}} > t) \, dx$. 
For small $t$, the heat loss $|D| - Q(t)$ is known to scale according to the geometry of the boundary. 
We refine the classical expansion   $|D| - Q(t)$ for $t \to 0^+$ interpreting this quantity as a measure of the \lq\lq trapping capacity" of $\partial D.$

Conversely, under Neumann boundary conditions, the process is a reflected Brownian motion ($\zeta = \infty$).
 In this framework, the boundary's influence is encoded in the hitting and exit times of interior subsets.
 We study the expected survival time $\mathbf{E}_x[\tau_{B}^{\text{exit}}]$ for a ball $B \subset \overline{D}$ and the hitting time $\mathbf{E}_x[\tau_{B}^{\text{hit}}]$ for $B \subset D$.
    These functionals provide a sharp characterization of the domain's geometry, revealing how the boundary $\partial D$ acts as a reflecting \lq\lq trap" that confines the process trajectories within specific regions of the state space.

Summarizing,
given the heat equation on $D$,
\begin{align}
    \partial_t u = \Delta u, \quad u_0=1, \quad u(t,x) = \mathbf{P}_x(\zeta > t)
\end{align}
\begin{itemize}
    \item Dirichlet bc on $\partial D$ implies $\zeta=\tau^\text{exit}_D$ and we study $Q(t) = \int_D \mathbf{P}_x(\zeta > t) dx$ for small times. In particular  the quantity $|D| - Q(t)$ for  $t \ll 1$ gives the heat loss on the boundary, that is \lq\lq how much the boundary traps the heat particles"
    \item  Neumann bc on $\partial D$ implies $\zeta = \infty$ and for a ball $B \subset \overline{D}$,  $\mathbf{E}_x[\tau^\text{exit}_B] \leq \infty$ gives information on the $\partial D$ in terms of trap behaviour on $B \cap \overline{D}$
    \item  Neumann bc on $\partial D$ implies $\zeta = \infty$ and for a ball $B \subset D$,  $\mathbf{E}_x[\tau^\text{hit}_B] \leq \infty$ gives information on the $\partial D$ in terms of trap boundary $\partial D.$
\end{itemize}

The study is therefore conducted through two complementary probabilistic lenses: the Dirichlet condition, which models Brownian motion \lq\lq killed" upon hitting the boundary, and the Neumann condition, which describes reflected Brownian motion. We characterize \lq\lq trap domains" as those where the expected hitting time to an internal subset is infinite, providing a rigorous mathematical framework to identify such geometries. Furthermore, we explore the role of \lq\lq sticky" Brownian motions and time-changed diffusions, where the process exhibits delays or prolonged stays in specific regions of the state space or along the boundary.

These theoretical results are applied to irregular  structures, including Koch snowflakes  and Sierpinski-type fractals.
 By establishing integral criteria and  by using non-local operators, we demonstrate how specific geometric parameters determine whether a domain acts as a trap, confining the trajectories and slowing down the global diffusion.

The plan of the paper is the following.

In Section 2, we consider trap domains according the definition in \cite{BCM} and provide some examples. In particular  we obtain a characterisation for  modified Koch snowflakes in Theorem \ref{KM}.

In Section 3 we recall  the definition and some properties of time changed diffusions with a particular insight 
on non-local dynamic boundary value problems. In particular we 
discuss the sticky behaviour in dimension $d=1$, then  for metric graphs and for bounded domains of $\mathbb{R}^d$ with $d>1$.

In Section 4, we  consider trap behaviours and we give a characterization in terms of reflected BM  in  Definition \ref{defTrapb}  and  in terms of killed BM in  Definition \ref{defTraphc}.

In Section 5, we  consider sub-diffusive behaviours and  in Section 6 we provide some examples and applications.

\bigskip

In the following, we use these notations:
\begin{itemize}
    \item $Z$ with infinitesimal generator $(A, D(A))$ is a continuous and symmetric Markov process on a bounded domain $D$;
     \item $X$ is a Markov process with local time $\gamma;$
    \item  $X^+$ is a reflected  Brownian motion  with local time $\gamma^+;$
    \item  $\bar{X}$ is a sticky Brownian motion  with local time $\bar{\gamma};$
    \item  $B=B(x, \varrho)$ is a ball centred at $x$ with radius $\varrho>0;$
    \item  $T_B(Z) = \inf\{t\,:\, Z_t \in \partial B\}$ for the continuous process $Z=\{Z_t\}_{t \geq 0}$ sometimes provided with additional specifications on $Z_0.$
\end{itemize}

\section{Trap domains}
\label{sec:TrapDom}

\subsection{Characterization}

We start with the following definition given in \cite[formulas (1.1) and (1.2)]{BCM} for the Brownian motion (sometimes, we write BM). Let $B \subset D$ be a closed ball with non-zero radius and denote by $T_B (X^+) = \inf\{t \geq 0 : X^+_t \in  \partial B\}$ the first hitting time of $B$ by $X^+$. The following characterization of $D$ does not depend on the choice of $B$. 
\begin{definition}
\label{defTrapd}
We say that $D$ is a \emph{trap domain} for $X^+$ if 
\begin{align*}
\sup_{x \in D \setminus B} \mathbf{E}_x[T_B(X^+)] = \infty.
\end{align*}
Otherwise, we say that $D$ is a \emph{non-trap domain} for $X^+$.
\end{definition}
In Definition \ref{defTrapd}, the random time $T_B=T_B(X^+)$ plays the role of lifetime for the Brownian motion on $\bar{D} \setminus \bar{B}$ reflected on $\partial D \setminus \partial B$ and killed on $\partial B$. On the other hand (as stated in \cite[Lemma 3.2]{BCM}), 
\begin{align}
\mathbf{E}_x[T_B]< \infty \quad \forall\, x \in D \setminus B.
\end{align}
This aspect suggests to pay special attention to the boundary. A process can be trapped on the boundary, it may have infinite lifetime depending on the regularity of $\partial D$. As described in \cite{BCM}, Definition \ref{defTrapd} makes sense only for non-smooth domains. We also observe that for the Brownian motion,  in the $2$-dimensional case for example, every point is almost surely not visited but there exists a random (big and uncountable) set of points visited infinitely often. Thus, the process may spend a very large time starting near the boundary, it therefore appears trapped.

\subsection{Examples of Trap domains}
\label{sec:VariantOmega}

({\it Koch domain}) A discussion on some modified Koch domains which become trap for the BM has been given in  \cite{BCM}.

 The Koch domain with any scale factor is non trap for the BM and we have proved in Proposition 5.2 in \cite{CDJEE} that such a property still holds for the time-changed BM on random Koch domains (whose boundaries have Hausdorff dimensions between 1 and 2). The problem is to determine under which alterations a domain is still non-trap for a given process. Here we define the variant $\Omega_\mathsf{M}$ of the  Koch snowflake $\Omega_\mathsf{K}$ (see \cite{BCM}).

We recall the definition of the Koch curve with endpoints $A = (0, 0),$ and $B = (1, 0)$. We consider the family $\Psi^\alpha=\{\psi^\alpha_1,\dots, \psi^\alpha_4\}$ of contractive similitudes  $\psi^\alpha_i:\C \rightarrow\C$, $i=1,\ldots,4$, with contraction factor $\alpha^{-1},$  $2<\alpha<4$, 
$$\aligned&\psi^\alpha_1(z)=\frac{z}{\alpha},\,\quad\quad\quad\quad\quad\quad\quad\quad\quad\quad\quad\quad
\psi^\alpha_2(z)=\frac{z}{\alpha}e^{i\theta(\alpha)}+\frac{1}{\alpha},\\
&\psi^\alpha_3(z)=\frac{z}{\alpha}e^{-
i\theta(\alpha)}+\frac12+i\sqrt{\frac{1}{\alpha}-\frac{1}{4}},\quad\quad\quad
\psi^\alpha_4(z)=\frac{z-1}{\alpha}+1,\endaligned$$
where $\theta(\alpha)=\arcsin\left( 
\frac{\sqrt{\alpha(4-\alpha)}}{2}\right).$

By the general theory of self-similar fractals (see \cite{Fal}, \cite{Kig}), there exists a unique closed bounded set $K_\alpha$ which is {\it invariant\/} with respect to $\Psi^\alpha$, that is, 
\begin{equation}
K_\alpha=\cup^4_{i=1}\psi^\alpha_i(K_\alpha).
\end{equation}
We recall that $K_\alpha$ supports a unique self-similar Borel measure 
\begin{align}
\label{mu-alpha-def}
\mu_\alpha \; \textrm{which is equivalent to the $d_f-$dimensional Hausdorff measure}
\end{align}
where $d_f=\frac{\log 4}{\log \alpha}$. Let $K^0$ be the line segment of unit length that has as endpoints $A=(0,0)$ and  $B=(1,0)$. We set, for each $n$ in $\N$, 
\begin{equation}
\label{Kn}
 K_\alpha^1=\bigcup_{i=1}^4 \psi^\alpha_i(K^0),\qquad
 K_\alpha^2=\bigcup_{i=1}^4 \psi^\alpha_i(K_\alpha^1), \qquad
 \dots, \qquad
 K_\alpha^{n+1}=\bigcup_{i=1}^4 \psi^\alpha_i(K_\alpha^n);
\end{equation}
${K_\alpha^n}$ is the so-called ${n}${-th} {pre-fractal curve}. Moreover, the iterates $K_\alpha^{n}$ converge to the self-similar set $K_\alpha$ in the Hausdorff metric, when $n$ tends to infinity.

We  now recall  the construction of  the snowflake type fractals.

 \begin{figure}
     \centering
     \includegraphics[width=0.25\linewidth]{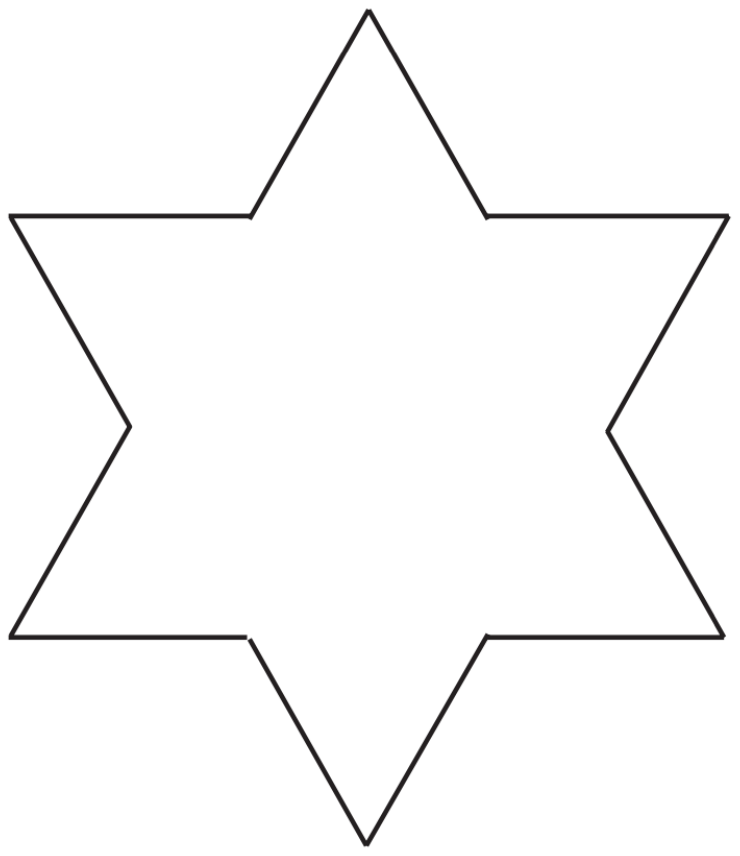}
     \caption{Classical Koch snowflake}
          \label{SNOW}
 \end{figure}

 Let $T_1$ be the triangle  with vertices $A = (0, 0), B = (1, 0),$ and $C = (\frac12, -\frac{\sqrt{3}}{2})$. We construct on the side with endpoints $A$ and $B$ the pre-fractal Koch curve defined before, which will be denoted by  $K^n_{1,\alpha}$ and the Koch curve defined before, which will be denoted by $K_{1,\alpha}$. In a similar way, we construct on the  other sides  the analogous pre-fractal Koch curves (the Koch curves) denoting  by   $K^n_{2,\alpha}$  and $K^n_{3,\alpha}$ (by $K_{2,\alpha}$ and $K_{3,\alpha}$) the  curves with endpoints $B$ and $C$, and $C$ and $A$, respectively.  We denote by  $\Omega_\alpha^n$  the pre-fractal domain that is the set bounded by the pre-fractal Koch curves $K^n_{ j,_\alpha},$  $j=1,2,3.$ Moreover,  we denote by $\Omega_\alpha$  the snowflake that is the set  bounded by the Koch curves $K_{ j,_\alpha},$  $j=1,2,3$ (see Figure \ref{SNOW}).

 When  $\alpha=3,$  we have that the the classical Koch snowflake  $\Omega_\mathsf{K}=\Omega_3.$  We point out that this set can be constructed also in the following way.
 Start with the equilateral triangle $T_1.$ Consider one of its sides $I$ and the equilateral triangle one of whose sides is the middle one third of $I$ and whose interior does not intersect $T_1.$ There are three such triangles; let $T_2$ be the closure of the union of these three triangles and $T_1.$
Then wee proceed inductively. Suppose $I$ is one of the line segments in $\partial T_j$ and consider the equilateral triangle one of whose sides is the middle one third of $I$ and whose interior does not intersect $T_j.$  Let $T_{j +1}$ be the closure of the union of all such triangles and $T_j.$ 
Then the  snowflake $\Omega_\mathsf{K}$ is the interior of the closure of the union of all triangles constructed in all inductive steps.

Now we recall  the construction of  a variant of the  Koch snowflake which can be
obtained from the snowflake $\Omega_\mathsf{K}$ as follows (\cite{BCM}) (see Figure \ref{SNOWTRAP}).

 \begin{figure}
         \centering
         \includegraphics[width=0.25\linewidth]{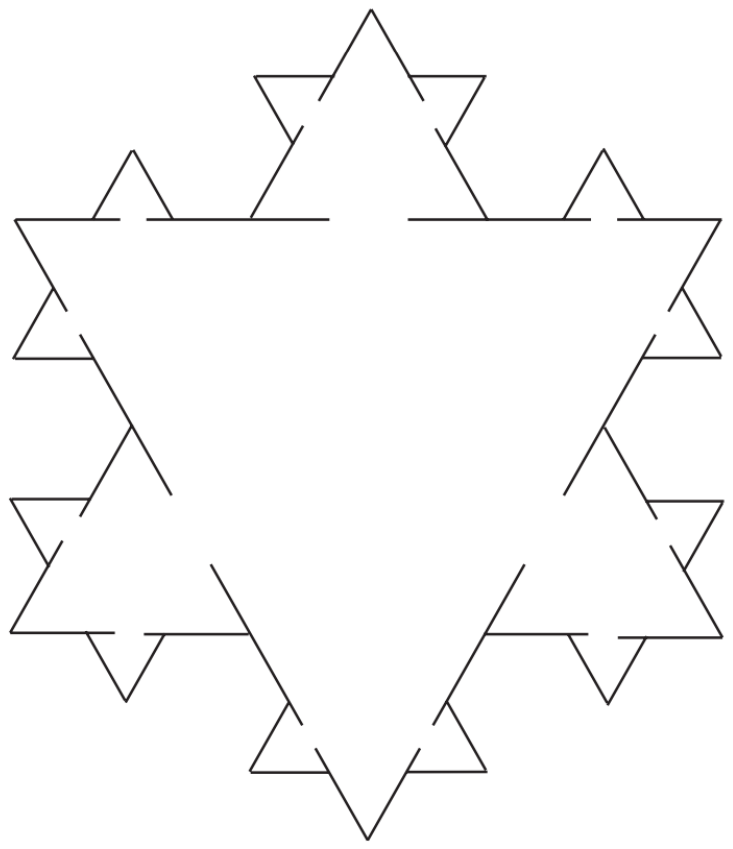}
         \caption{A variant of the Koch snowflake }
         \label{SNOWTRAP}
     \end{figure}

 Fix a function $w : (0, \infty) \to (0, \infty)$
with $w (a) \leq a$ for all $a.$ Consider any two triangles in the above construction whose
boundaries have a common part $I$ with length $a > 0.$  Let $I'$ be I with the middle
$w(a)$-portion removed, i.e., if  $I$ has endpoints $x$ and $y$ then $I'$ is the union of two closed line segments, the first with endpoints $x$ and $x +\frac{ a-w (a)}2\frac{ y-x} a,$ and the second with endpoints $y$ and   $y -\frac{ a-w(a)}2\frac{ y-x} a.$ 

Let $\Omega_\mathsf{M}$ be $\Omega_\mathsf{K}$ minus all sets of the form
$I' $. The point of the construction is that the passage from a smaller
 triangle to a bigger triangle is blocked in $\Omega_\mathsf{M}$ by a wall with a small opening $w(a).$ 
 Trivially if $w(a)=a=\frac13$ we obtain the usual Koch snowflake $\Omega_\mathsf{K}$. In a similar manner, it is possible to construct   variants of the  snowflake type fractals that we denote again by  $\Omega_\mathsf{M}$ by considering the self-similar set  $K_{\alpha,w}$ defined as the
unique closed bounded set $K_\alpha$ which is {\it invariant\/} with respect to the family $\Psi^{\alpha,w}$ where $\Psi^{\alpha,w}=\{\psi^\alpha_1,\psi^\alpha_2, \psi^\alpha_3,\psi^\alpha_4, \psi^\alpha_5,\psi^\alpha_6 \}$ with $\psi^\alpha_i$ $i=1,\ldots,4$ defined as before and 
$$\psi^\alpha_5(z)=\frac{1}{\alpha}+\beta \Re z ,\,\quad\quad\quad\quad\quad\quad\quad\quad\quad\quad\quad\quad
\psi^\alpha_6(z)=  1-\frac{1}{\alpha}-\beta+ \beta \Re z   $$
with  $$\beta= \frac12- \frac{1}{\alpha}   -\frac{w(1- \frac{2}{\alpha} )}{2};$$
in this case  $a=1- \frac{2}{\alpha}$

Now we study the trap condition   for the set $\Omega_\mathsf{M}$ 
with a  particular choice of the function  $w$  (see \cite{BCM} and \cite{BBC}).

Further on we write $w(a)= w(\gamma, a)$ with $\gamma \in \mathbb{R}$.

\begin{theorem} \label{KM}
Suppose that $w (\gamma, a) = exp(-a^{-\gamma})$. Then,
\begin{itemize}
\item For $\gamma < 2$, $\Omega_\mathsf{M}$ is not a trap domain for $X^+$;
\item For $\gamma \geq 2$, $\Omega_\mathsf{M}$ is  a trap domain for $X^+$.
\end{itemize}
\end{theorem}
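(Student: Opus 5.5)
We plan to deduce the statement from the integral criterion of \cite{BCM} for domains that are trees of chambers joined by thin passages. In \cite{BCM} (and \cite{BBC}) a domain made of chambers linked through narrow openings is treated as a weighted tree. The reflected Brownian motion $X^+$ is then compared with a random walk on this tree, and $\sup_x \mathbf{E}_x[T_B(X^+)]$ is comparable to the sum, along descending chains, of (volume of the subtree below an edge) $\times$ (effective resistance of the passage). So the first step is to encode $\Omega_\mathsf{M}$ as a rooted tree. The root is the triangle $T_1$. The vertices at generation $n$ are the triangles of side length $a_n \asymp 3^{-n}$ added at step $n$. Each such triangle is joined to its parent through an opening of width $w(\gamma,a_n)=\exp(-a_n^{-\gamma})$. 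The number of vertices at generation $n$ grows like $4^n$. The area of the subtree hanging below a generation-$n$ triangle is $\asymp a_n^2$, since the Koch subtree is self-similar.

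The second step is to estimate the resistance of a passage. In dimension two, a slit of width $\varepsilon$ in a wall separating two regions of size $a$ has capacity comparable to $1/\log(a/\varepsilon)$. This follows by conformal mapping, or by an explicit test function that is logarithmic in the distance to the gap. Hence the resistance of the $n$-th generation passage is
\[
R_n \asymp \log\frac{a_n}{w(\gamma,a_n)} \asymp a_n^{-\gamma}.
\]
A ball is crossed in time of order $a_n^2$ within each chamber, so the in-chamber times are negligible compared with the waiting times at the passages.

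The third step is to combine these estimates. Starting from a point deep in a branch (at generation $N$), the expected time to reach $B\subset T_1$ is, up to constants, bounded above and below by
\[
\sum_{n\le N} (\text{area below generation } n)\cdot R_n \asymp \sum_{n\le N} a_n^{2}\, a_n^{-\gamma} = \sum_{n\le N} 3^{-n(2-\gamma)} .
\]
For the lower bound we use the commute-time/Dirichlet principle: the expected hitting time is at least the volume below the edge times the resistance, and we apply this along the branch. For the upper bound we sum the same quantity over the ancestral path of the starting point. When $\gamma<2$ the series converges uniformly in $N$, so the supremum is finite and $\Omega_\mathsf{M}$ is non-trap. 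When $\gamma\ge 2$ each term is $\gtrsim 1$, so the sum diverges as $N\to\infty$ and the supremum is infinite, which gives a trap domain. The borderline case $\gamma=2$ gives a divergent series with constant terms, hence still trap.

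The main obstacle is making the comparison between $X^+$ and the tree rigorous with constants that are uniform in the generation. This means showing that the exit time through the gap and the hitting time within each self-similar chamber satisfy two-sided bounds that depend only on $a_n$ and $w(a_n)$. The chambers have fractal Koch boundaries with further gaps, so we need some uniformity there. I would obtain it by invoking the general criterion of \cite{BCM} for such tree-like domains (their Theorem on domains with thin passages, where the trap condition is equivalent to divergence of $\sum a_n^2\log(a_n/w(a_n))$ over a branch). The scaling by $a_n$ handles uniformity, because the local geometry of every chamber is similar to that of the first one. The remaining task is then to check the hypotheses of that criterion for $\Omega_\mathsf{M}$.
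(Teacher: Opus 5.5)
Your argument is correct in substance, but it reaches the result by a different route from the paper. The paper does not pass through a network comparison. It invokes the hyperbolic-block characterization of non-trap simply connected planar domains of finite area from Section 2.1 of \cite{BCM}. Around the centre $y$ of each opening between a triangle of side $a$ and its neighbour, it places the half-circles $|z-y|=2^{-m}$ with $e^{-a^{-\gamma}}\le 2^{-m}\le a/8$, on both sides, and joins consecutive openings by the polygonal line $\sigma$. It then counts: of order $a^{-\gamma}$ curves per opening, each contributing of order $a^2$ to $\sum_n n|D_n|$, hence $\sum_k (a^{2-\gamma})^k$ along a chain. The two computations coincide term by term. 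The number $\log_2(a/w(a))$ of half-annuli across an opening is, up to constants, the extremal length of the passage, i.e.\ your resistance $R_n\asymp\log(a_n/w(a_n))$. And since $\sum_n n|D_n|=\sum_n|\bigcup_{k\ge n}D_k|$, each crossing curve is charged with the area lying beyond it, i.e.\ your volume below the edge. Your picture buys transparency: the narrow-escape mechanism, time $\asymp$ area $\times\log(\text{size}/\text{width})$, shows at once why $\gamma=2$ is critical, and it could in principle be carried outside the simply connected planar setting. The paper's route buys that the cited theorem absorbs all the analysis, fractal walls included, leaving only a counting exercise.

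Two caveats. First, the decisive step is the two-sided comparison with constants uniform in the generation. You delegate it to a criterion for \lq\lq domains with thin passages\rq\rq{} that I do not think \cite{BCM} states in the form you quote. The tool available there for this setting is the hyperbolic-block criterion, and the dictionary above is the cleanest way to make your argument rigorous. Self-similarity alone would not give the uniformity anyway. After rescaling a chamber of side $a$ to unit size, its exit slit has relative width $w(a)/a$, which is not scale invariant, so what you need is uniformity in the slit ratio. The half-annuli of radii $2^{-m}\in[w(a),a/8]$ provide exactly this, since they are hyperbolic blocks with a parameter independent of both $a$ and $w(a)$.

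Second, a minor point about the tree. In the Koch construction a triangle of side $a$ has children of every side $a3^{-k}$, $k\ge 1$, attached directly to it. So a triangle's parent need not lie in the previous generation, and along a general branch the sides are distinct but not consecutive powers of $1/3$. This is harmless: the upper bound is dominated by the full geometric series, and the lower bound uses the branch that always enters the middle-third child.
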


\begin{proof}

 We consider a prime end $\xi$ in $\Omega_\mathsf{M}$ which is accessible  by going through an infinite sequence of  triangles  in the construction of the fractal domain as when  the prime ends correspond to boundary points accessible via a finite sequence of triangles  is easier  (see, for the definition of prime ends,  \cite{P}).
  We use the following  characterization given in Section 2.1  of \cite{BCM}: a simply connected  planar domain $D$   with finite 2-dimensional Lebesgue measure $D<\infty$  is a non-trap domain
if and only if there is a constant $\varepsilon  > 0$ such that for each prime end $\xi\in \partial D $ there is a
system of curves $\{\gamma_n\} \cup \sigma$ dividing $D$ into hyperbolic blocks with parameter $\varepsilon $  such that $$\sup_{\xi}\sum_{n} n|D_n|\leq \frac1{\varepsilon}$$
where $|D_n|$ denotes  2-dimensional Lebesgue measure of  the region $D_n$  contained between the curves $\gamma_n$ and $\gamma_{n+1}.$  
Let  $T_1$ and $T_2$   be two adjacent triangles  in this construction sequence, where $a$ denotes the side length of the smaller triangle.  The  opening between these triangles is defined by   $w(a)=exp(-a^{-\gamma})$ and 
  let $y$ be the center of  the opening. We define an index   $m$ such that  $exp(-a^{-\gamma} )\leq 2^{-m}\leq a/8.$ Let  $\gamma_m^1 =\{z\in T_1 :|y-z|=2^{-m}\},$  $\gamma_m^2 =\{z\in T_1 :|y-z|=2^{-m}\}$ and 
    $\sigma$ be the polygonal line with vertices at the center of $\Omega_\mathsf{M}$ and consecutive centers of openings between the triangles in the sequence leading to $\xi.$
   Then the system of curves given by the union of $\sigma$ and all curves $\gamma_m^1$  and $\gamma_m^2$
 corresponding to all pairs of adjacent triangles in the sequence, divides the domain into hyperbolic blocks. 
 Consider the sets $D_n$ whose boundary contains $\gamma_m^1$  or $\gamma_m^2$ corresponding to a triangle
with side length $a. $
The area of this set $D_n$ is of order  $c_1a^2$ and the number of such 
domains $D_n$ corresponding to a single triangle is of order by $c_2 a^{-\gamma}.$ Hence, the portion of   $\sum n| D_n|$ corresponding to the triangle with side length $a$ is of order  $c_1a^2c_2 a^{-\gamma}.$
As the sequence of triangle diameters  along $\sigma$ is     $a_k=a^k$  with $a<1$ we obtain   $$\sum n|D_n| \simeq  c_3  \sum  (a^{2-\gamma})^k $$ is finite if $\gamma < 2$ and it diverges to $\infty$  for $\gamma \geq 2. $

\end{proof}

 \begin{figure}
         \centering
         \includegraphics[width=0.25\linewidth]{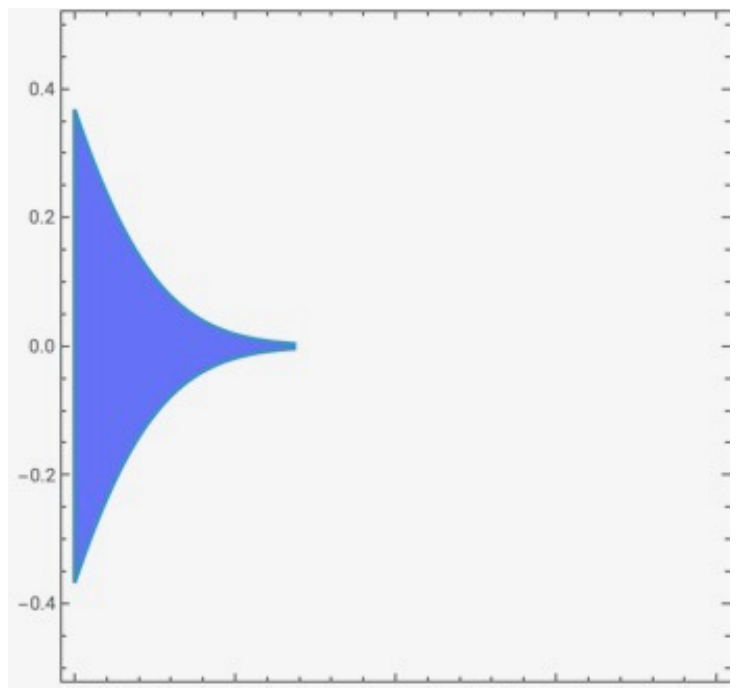}
         \caption{A Horn domain  with  $f=e^{-r}$  }
         \label{HORN}
     \end{figure}

({\it Horn domain}) A further example of trap domain is the horn domain $\Omega_\mathsf{H}.$

We recall that given 
$ f: [1,\infty)\to(0,\infty)$  a Lipschitz function,  the corresponding horn
domain is defined by
$$\Omega_\mathsf{H}= \left\{x = (x, y): \, | y | \leq f(x) \hbox{ and }  x>1 \right\}. $$

By Proposition 2.11 in \cite{BCM} a horn domain $\Omega_\mathsf{H}$ is a trap domain if and only if  $$ \int_1^\infty  \int_1^x  \frac1{f(y)}dy ) f(x)dx=\infty.$$ 
Then, for example for $f=e^{-r^b},$ we have that the corresponding horn domain is trap if and only if $b\leqslant 2$ (see Figure \ref{HORN}).

\section{Time changed diffusions}

For the Markov process $X = \{X_t,\, t \geq 0\}$ on $D$ with generator $(A, D(A))$ we denote by $\zeta$ its lifetime, that is $\zeta := \inf\{t >0\,:\, X_t \notin D\}$ meaning that $X_\zeta$ belongs to the cemetery point. Let $\tau$ be a random time and denote by $X^\tau := X \circ \tau$ the process $X$ time-changed by $\tau$. It is well-known that $X^\tau$ is Markovian only for a Markovian time change $\tau$, otherwise from a Markov process we obtain a non-Markov process. Denote by $\zeta^\tau$ the lifetime of $X^\tau$. We know that $X^\tau$ can be regarded as a delayed or rushed process with
\begin{align*}
\mathbf{P}_x(\zeta >t) < \mathbf{P}_x(\zeta^\tau > t) \leq \frac{1}{t} \mathbf{E}_x[\zeta^\tau] \;\; \textrm{or} \;\; \mathbf{P}_x(\zeta^\tau>t) < \mathbf{P}_x(\zeta > t) \leq \frac{1}{t} \mathbf{E}_x[\zeta^+].
\end{align*}
In particular, we have the following characterization.
\begin{definition}
\label{defDelRus}
(See \cite{CapDovDelRus}). For the process $X^+$ on $D$:
\begin{itemize}
\item[-] We say that $X$ is delayed by $\tau$ if $\mathbf{E}_x[\zeta^\tau] > \mathbf{E}_x[\zeta]$, $\forall \, x \in D$.
\item[-] We say that $X$ is rushed by $\tau$ if $\mathbf{E}_x[\zeta^\tau] < \mathbf{E}_x[\zeta]$, $\forall \, x \in D$.
\end{itemize}
Otherwise, we say that $X$ runs with its velocity.
\end{definition}
Similar arguments apply when replacing lifetimes with exit times, as discussed below. We now provide an overview of the context addressed hereafter.

\subsection{Non-local Cauchy problems}

For $\lambda>0$ we introduce the Bernstein function 
\begin{equation}
\Phi(\lambda) = \int_0^\infty \left( 1 - e^{ - \lambda z} \right) \Pi(dz) \quad \textrm{with} \quad \frac{\Phi(\lambda)}{\lambda} = \int_0^\infty e^{-\lambda z} \overline{\Pi}(z)dz
\label{Symbol}
\end{equation} 
where $\Pi$ on $(0, \infty)$ with $\int_0^\infty (1 \wedge z) \Pi(dz) < \infty$ is the associated L\'evy measure and $\overline{\Pi}$ is the so called tail of $\Pi$ given by $\overline{\Pi}(z) = \Pi((z, \infty))$. For details, see the books  \cite{Ber96, BerBook}. The symbol $\Phi$ can be associated with the Laplace exponent of a subordinator $H$, that is $\mathbf{E}_0[\exp( - \lambda H_t)] = \exp(- t \Phi(\lambda))$. We do not consider step-processes with $\Pi((0, \infty)) < \infty$ and therefore we focus only on strictly increasing subordinators with infinite measures. Thus, the inverse process $L$ turns out to be a continuous process with non-decreasing paths. 

Let us consider the inverse process $L_t = \inf \{s \geq 0\,:\, H_s >t \}$ and define the time-changed process $X^L = \{X^L_t,\, t \geq 0\}$. It is clear that the convolution type operator
\begin{align*}
D^\Phi_t u(t) = \int_0^t u^\prime(s) \overline{\Pi}(t-s)ds \quad (\textrm{for which } D^\Phi_t u = \dot{u} \textrm{ iff } \Phi(\lambda)=\lambda )
\end{align*}
introduces the non-local (Initial Value or) Cauchy problem $D^\Phi_t u = A u$ with $u_0=f \in D(A)$ whose solution has the probabilistic representation  
\begin{align}
\label{sol-time-frac-problem}
u(t,x) = \mathbf{E}_x[f(X^{L}_t), t < \zeta^{L}]
\end{align}
where $\zeta^{L}$ is the lifetime of $X^L$. We refer to \cite{CapDovVIA} and the references therein. Here we only underline that $D^\Phi_t$ has alternative forms investigated in \cite{chen, kochubei, toaldo}. The lifetime $\zeta^L$ has a direct connection with the following inequality, for $p \in [1, \infty)$,
\begin{align}
\label{YoungSymb}
\int_0^\infty |D^\Phi_t u |^p dt \leq \left( \int_0^\infty |u^\prime |^p dt \right) \left( \lim_{\lambda \downarrow 0} \frac{\Phi(\lambda)}{\lambda} \right)^p \quad \textrm{with} \quad \lim_{\lambda \downarrow 0} \frac{\Phi(\lambda)}{\lambda} = \Phi^\prime(0)
\end{align}
where $\Phi^\prime(0)$ is finite only in some cases (see \cite{CapDovVIA}). Our discussion is mainly concerned with the case where the functions $u$, $D^\Phi_t u$ are of exponential order such that
\begin{align*}
\mathcal{L}[D^\Phi_t u](\lambda): = \int_0^\infty e^{-\lambda t} D^\Phi_t u\, dt, \quad \mathcal{L}[u](\lambda):= \int_0^\infty e^{-\lambda t} u\, dt
\end{align*}
exist and, for $\omega\geq 0$, 
\begin{align*}
\mathcal{L}[D^\Phi_t u](\lambda) = \frac{\Phi(\lambda)}{\lambda} \left( \lambda \mathcal{L}[u](\lambda) - u_0 \right), \quad \lambda > \omega.
\end{align*}

We now provide the following result focusing on $\tau=L$, the inverse to an $\alpha$-stable subordinator. 
\begin{lemma}
Let $L$ be the inverse of the $\alpha$-stable subordinator $H$. 
\begin{itemize}
\item[i)] Assume $0 < \beta < 1/\alpha$, then
\begin{align}
    \mathbf{E}_0[(L_t)^\beta] = \frac{\Gamma(\beta + 1)}{\Gamma(\beta \alpha + 1)} t^{\alpha \beta}, \quad t>0.
    \label{meanLstable}
\end{align}
\item[ii)] Assume $0< \beta < \alpha$ and $\mathbf{E}_x[\zeta^{\beta/\alpha}]< \infty$. Then, 
    \begin{align}
\mathbf{P}_x(\zeta^L > t) \leq \left( \frac{\Gamma(1-\beta/\alpha)}{\Gamma(1-\beta)} \, \mathbf{E}_x[\zeta^{\xi/\alpha}] \right) t^{-\beta}, \quad x \in D, \; t >0.
\end{align}
\end{itemize}
\end{lemma}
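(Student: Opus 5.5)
The plan is to reduce both items to the self-similarity of the $\alpha$-stable subordinator $H$ (with $0<\alpha<1$), which gives $H_s \overset{d}{=} s^{1/\alpha} H_1$ under $\mathbf{P}_0$, combined with the classical formulas for fractional moments of $H_1$. Since $\Phi(\lambda)=\lambda^\alpha$, we have $\mathbf{E}_0[e^{-\lambda H_1}]=e^{-\lambda^\alpha}$. From this I would derive two moment formulas:
\begin{align*}
\mathbf{E}_0[H_1^{-q}] = \frac{1}{\Gamma(q)}\int_0^\infty \lambda^{q-1} e^{-\lambda^\alpha}\, d\lambda = \frac{\Gamma(1+q/\alpha)}{\Gamma(1+q)}, \qquad q>0,
\end{align*}
using $x^{-q}=\Gamma(q)^{-1}\int_0^\infty \lambda^{q-1}e^{-\lambda x}d\lambda$ and Tonelli, and
\begin{align*}
\mathbf{E}_0[H_1^{\beta}] = \frac{\beta}{\Gamma(1-\beta)}\int_0^\infty \lambda^{-\beta-1}\left(1-e^{-\lambda^\alpha}\right) d\lambda = \frac{\Gamma(1-\beta/\alpha)}{\Gamma(1-\beta)}, \qquad 0<\beta<\alpha,
\end{align*}
using $x^\beta = \beta\,\Gamma(1-\beta)^{-1}\int_0^\infty \lambda^{-\beta-1}(1-e^{-\lambda x})d\lambda$ for $0<\beta<1$. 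Both are routine Gamma-integral computations after the substitution $\lambda^\alpha = s$; the second needs an integration by parts, which is the only place where the condition $\beta<\alpha$ enters, for integrability at $\lambda=\infty$ of the boundary term.

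For item i), I will use that $H$ is strictly increasing and continuous in law, so $\{L_t \le s\} = \{H_s \ge t\}$. By self-similarity,
\[
\mathbf{P}_0(L_t\le s)=\mathbf{P}_0(s^{1/\alpha}H_1\ge t)=\mathbf{P}_0\big((t/H_1)^\alpha \le s\big),
\]
so $L_t \overset{d}{=} t^\alpha H_1^{-\alpha}$. Hence $\mathbf{E}_0[L_t^\beta]=t^{\alpha\beta}\,\mathbf{E}_0[H_1^{-\alpha\beta}]$. Applying the negative-moment formula with $q=\alpha\beta$ gives exactly $\Gamma(\beta+1)/\Gamma(\alpha\beta+1)$. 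The restriction $\beta<1/\alpha$ only ensures that the formulas are used in the range stated; no extra integrability issue arises.

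For item ii), I read the exponent $\xi/\alpha$ in the statement as $\beta/\alpha$, which I take to be a typo. I first identify the lifetime of $X^L$: since $L$ is continuous and $\{L_t<\zeta\}=\{H_{\zeta}>t\}$ up to null sets, we get $\zeta^L = H_{\zeta}$ with $H$ independent of $X$. Markov's inequality then gives
\[
\mathbf{P}_x(\zeta^L>t)\le t^{-\beta}\,\mathbf{E}_x[H_\zeta^\beta].
\]
Conditioning on $\zeta$ and using independence with self-similarity, $\mathbf{E}_x[H_\zeta^\beta] = \mathbf{E}_x[\zeta^{\beta/\alpha}]\,\mathbf{E}_0[H_1^\beta]$. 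Inserting the positive-moment formula yields the claimed bound, finite under the assumption $\mathbf{E}_x[\zeta^{\beta/\alpha}]<\infty$.

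The main obstacle I anticipate is the careful identification $\zeta^L=H_\zeta$ and the inclusion $\{\zeta^L>t\}\subseteq\{H_\zeta\ge t\}$. This requires the strict monotonicity of $H$, which holds because $\Pi$ is infinite, as assumed in the paper. Once that step is in place, the rest is bookkeeping with Gamma functions.
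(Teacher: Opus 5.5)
Your proposal is correct and follows the paper's route. Item ii) is exactly Markov's inequality applied to $\zeta^L = H_\zeta$, combined with $\mathbf{E}[H_t^\beta]=\frac{\Gamma(1-\beta/\alpha)}{\Gamma(1-\beta)}t^{\beta/\alpha}$; the paper's proof confirms your reading of $\xi$ as $\beta$. Item i) is the standard calculation the paper only cites, which you carry out via $L_t \overset{d}{=} t^\alpha H_1^{-\alpha}$.

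One small correction to your commentary. In $\int_0^\infty \lambda^{-\beta-1}(1-e^{-\lambda^\alpha})\,d\lambda$, the hypothesis $\beta<\alpha$ is needed as $\lambda \to 0$, where $1-e^{-\lambda^\alpha}\sim\lambda^\alpha$; this reflects the tail $\mathbf{P}_0(H_1>x)\sim c\,x^{-\alpha}$. At $\lambda=\infty$, $\beta>0$ suffices.
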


\begin{proof}
The point $i)$ comes from standard calculation (see for example \cite[formula 4.9]{ECPmd}). We focus on $ii)$. From \cite[Theorem 3.1]{CapDovDelRus} we have that $E[\zeta^L] = \mathbf{E}[H_{\zeta}]$ and
\begin{align*}
\mathbf{P}_x(\zeta^L > t) \leq \frac{1}{t^\beta} \mathbf{E}_x[(H\circ \zeta )^\beta], \quad \beta < \alpha
\end{align*}
from which, by taking into account (\cite[formula 4.7]{ECPmd}))
\begin{align}
    \mathbf{E}[(H_t)^\beta] = \frac{\Gamma(1-\beta/\alpha)}{\Gamma(1-\beta)} t^{\beta/\alpha}, \quad \beta < \alpha
    \label{momH}
\end{align}
we write
\begin{align*}
\mathbf{P}_x(\zeta^L > t) \leq \frac{\Gamma(1-\beta/\alpha)}{\Gamma(1-\beta)} \, \mathbf{E}_x[\zeta^{\beta/\alpha}] \, t^{-\beta}, \quad x \in D.
\end{align*}    
\end{proof}

\begin{remark}
We notice that, for the process $X^L$ with general $\Phi$, we have the mean squared displacement 
\begin{align}
\mathbf{E}_x[(X^L_t)^2] = \mathbf{E}_x[(X_1)^2] \, \mathbf{E}_0[L_t] \sim 1/\Phi(1/t)
\end{align}
(from Proposition 1, page 74 of \cite{Ber96}) which agrees with \eqref{meanLstable}. Moreover, 
\begin{align}
\mathbf{E}_x[T_{B}(X^L)] = \mathbf{E}_x[T_{B}(X^+)]\, \mathbf{E}[H_1] \sim \Phi^\prime(0).
\end{align}

\end{remark}

\subsection{Non-local dynamic boundary value problems}

We study sticky Brownian motions (Sticky BM). We say that $\bar{X}$ is a sticky Brownian motion on $\bar{D}:= D \cup \partial D$ if we are able to identify $D_0 \subset \bar{D}$ such that the set $\{t\,:\, \bar{X}_t \in D_0\}$ has positive Lebesgue measure. Thus, we focus on occupation times. \\

We first discuss the sticky behaviour in dimension $d=1$, for metric graphs and for bounded domains of $\mathbb{R}^d$ with $d>1$.

\paragraph{Sticky BM on the real line}
The sticky Brownian motion $\bar{X}$ on $\mathbb{R}$ has been first investigated by Feller in the 1950’s and Itô and McKean in the 1960’s. If $\{0\}$ is a sticky point for $\bar{X}$, then the process $\bar{X}$ behaves like a Brownian motion on $\mathbb{R} \setminus \{0\}$. For the stochastic equation
\begin{align}
\bar{X}_t = x + \int_0^t \mathbf{1}_{(\bar{X}_s \neq 0)} dW_s
\label{SDEstickyBass}
\end{align}
where $W$ is a one-dimensional Brownian motion, Bass (see \cite{BasSticky}) proved weak uniqueness (uniqueness in law) among continuous local martingales with speed measure like  
\begin{align}
m(dx) = dx + (\eta/\sigma) \delta_0(dx)
\label{mMeasure}
\end{align}
where $\eta/\sigma$ is a positive constant (it will be clear later why we write such a constant in terms of $\eta>0$ and $\sigma>0$). In \cite{BasSticky}, it has been proved that for each constant $\eta/\sigma>0$, no strong solution to \eqref{SDEstickyBass} among the class of continuous martingales with speed measure \eqref{mMeasure} exists. Thus, given $W$, we are not able to find a continuous martingale $\bar{X}$ with speed measure \eqref{mMeasure} satisfying \eqref{SDEstickyBass} such that $\bar{X}$ is adapted to the filtration of $W$. As a consequence of this fact, some approximations to the solution of \eqref{SDEstickyBass} do not converge in probability, although they do
converge weakly.

Let us denote by $\gamma_t(y)$ the local time of $X$ on $\mathbb{R}$ at $y \in \mathbb{R}$. Then, write $\gamma_t = \gamma_t(0)$. It is well known that $\bar{X}$ can be obtained via time change as $X \circ V^{-1}_t$ where 
\begin{align*}
V_t = \int_0^\infty \gamma_t(x) m(dx) = t + (\eta/\sigma) \gamma_t, \quad t\geq 0
\end{align*}
and (almost surely) $V \circ V^{-1}_t = t$ holds. In particular, for $f \in C_b(\mathbb{R})$, we have
\begin{align}
\mathbf{E}_x[f(\bar{X}_t)] = \mathbf{E}_x[f(X\circ V^{-1}_t)], \quad t \geq 0,\, x \in \mathbb{R}.
\label{repSemigX}
\end{align}
Formula \eqref{repSemigX} provide the probabilistic representation of the Feller-Wentzell semigroup. Thus, the weak formulation here is meant in terms of PDEs. In particular, the heat equation is equipped with a dynamic boundary condition.\\

Recently, non-local dynamic boundary conditions have been considered (see \cite{DovSticky1}). The (classical) solution $u \in C((0, \infty) \times [0, \infty)) \cap D_L$ to
\begin{align}
\label{FBVP-realLine}
\left\lbrace
\begin{array}{ll}
\displaystyle \dot{u}(t,x) = u^{\prime \prime}(t,x), \quad t>0, \, x \in (0,\infty),\\
\\
\displaystyle \eta D^\Phi_t u(t,0) = \sigma u^\prime(t,0) - c\, u(t, 0), \quad t>0,\\
\\
\displaystyle u(0,x) = f(x), \quad x \geq 0, \quad f \in C_b[0, \infty),
\end{array}
\right.
\end{align}
(where $u \in D_L$ ensures existence of $D^\Phi_t u$) has the probabilistic representation
\begin{align*}
u(t,x) = \mathbf{E}_x [f(\bar{X}_t)] = \mathbf{E}_x\left[f(X^+ \circ V^{-1}_t) \exp \left( -(c/\sigma)\, \gamma^+ \circ V^{-1}_t \right) \right]
\end{align*}
where $V^{-1}_t$ is the inverse of the process 
$$V_t = t + H \circ (\eta/ \sigma) \gamma^+_t,$$
where $H$ is a subordinator with symbol $\Phi$ independent from $X^+$ and $\gamma^+=\gamma^+_t(0)$ is the local time of $X^+$. The process $\bar{X}$ on $[0, \infty)$ behaves like a Brownian motion only on $(0, \infty)$ and we can still consider \eqref{SDEstickyBass}. However, the process $\bar{X}$ now is Markov on $[0, \infty)$ only for $\Phi(\lambda)=\lambda$. For all possible choices of $\Phi$, the zero set $\{t\,:\, \bar{X}_t=0\}$ has positive Lebesgue measure. Despite the fact $\bar{X}$ moves along the path of $X^+$, we are able to identify a sequence $\{\bar{e}_i\}_i$ of holding times (at $\{0\}$) for $\bar{X}$ of independent and identically distributed random variables, in particular $\bar{e}_i$ equals in law $H_\chi$ with $\mathbf{P}(\chi > z) = e^{-(\sigma/\eta) z}$ for all $i$. As an example we recall the case $\Phi(\lambda)=\lambda^\alpha$ in which the holding times are Mittag-Leffler random variables. Assume $c=0$ and observe that, for $B=B(0, \varrho)$ and $\varrho>0$, 
\begin{align*}
    \mathbf{E}_x[f(\bar{X}), \, t < T_B(\bar{X})] =  \mathbf{E}_x[f(X^+ \circ V^{-1}_t), \, t < V \circ T_B(X^+)], \quad x \in B
\end{align*}
leads to $\mathbf{E}_x[T_B(\bar{X})] = \mathbf{E}_x[T_B(X^+)] + (\eta/\sigma) \Phi^\prime(0)\, (\varrho-x)$ where $\Phi^\prime(0) = \mathbf{E}[H_1]$ and $\gamma^+ \circ T_B$ is an exponential r.v. with $\mathbf{E}_x[\gamma^+ \circ T_B]=\varrho-x$. In the present work we mainly focus on stable subordinators for which $\Phi^\prime(0)=\infty$. We provide the following result.
\begin{theorem}
    Let $\theta\geq 0$, $\beta < \alpha$. Then,
    \begin{align}
         t^\theta \big( 1 - C\, t^{\beta (\frac{1}{2\alpha} -1)}\big) \leq \mathbf{E}_0[(V^{-1}_t)^\theta] \leq t^\theta
    \end{align}
    with
    \begin{align}
        C=C(\alpha, \beta, \theta) = 4^\frac{\beta}{2\alpha} \frac{\Gamma(\frac{\beta}{2\alpha} + \frac{1}{2})}{\Gamma(1/2)} \theta \frac{\Gamma(\theta + \frac{\beta}{2\alpha}) \Gamma(1-\frac{\beta}{\alpha})}{\Gamma(\theta + \frac{\beta}{2\alpha} - \beta +1 )} (\eta/\sigma)^{\beta/\alpha}.
    \end{align}
    \label{thm:momentsInvVbar}
\end{theorem}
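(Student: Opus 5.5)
The upper bound is immediate from the construction. Since $V_s = s + H\circ(\eta/\sigma)\gamma^+_s \geq s$, the inverse satisfies $V^{-1}_t \leq t$ almost surely, and therefore $\mathbf{E}_0[(V^{-1}_t)^\theta] \leq t^\theta$.

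For the lower bound I will estimate the deficit $t^\theta - \mathbf{E}_0[(V^{-1}_t)^\theta]$. Since $0 \leq V^{-1}_t \leq t$, I write
\begin{align*}
t^\theta - (V^{-1}_t)^\theta = \theta\int_0^t r^{\theta-1}\mathbf{1}_{(V^{-1}_t < r)}\,dr .
\end{align*}
Taking expectations (Fubini) and using $\{V^{-1}_t < r\} \subset \{V_r \geq t\}$, which follows from monotonicity of $V$ and $V\circ V^{-1}_t = t$, gives
\begin{align*}
t^\theta - \mathbf{E}_0[(V^{-1}_t)^\theta] \leq \theta\int_0^t r^{\theta-1}\,\mathbf{P}_0\big(H\circ(\eta/\sigma)\gamma^+_r \geq t-r\big)\,dr .
\end{align*}

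Next I bound the probability inside the integral, exactly as in part ii) of the Lemma. Markov's inequality with exponent $\beta<\alpha$ gives
\begin{align*}
\mathbf{P}_0\big(H\circ(\eta/\sigma)\gamma^+_r \geq t-r\big) \leq (t-r)^{-\beta}\,\mathbf{E}_0\big[(H\circ(\eta/\sigma)\gamma^+_r)^\beta\big].
\end{align*}
I then condition on $\gamma^+$, using independence of $H$ and $X^+$, and apply \eqref{momH}. This yields
\begin{align*}
\mathbf{E}_0\big[(H\circ(\eta/\sigma)\gamma^+_r)^\beta\big] = \frac{\Gamma(1-\beta/\alpha)}{\Gamma(1-\beta)}\,(\eta/\sigma)^{\beta/\alpha}\,\mathbf{E}_0\big[(\gamma^+_r)^{\beta/\alpha}\big].
\end{align*}

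For the local time at $0$ of the reflected Brownian motion started at $0$ (with generator $\Delta$, so variance $2r$), Lévy's identity gives $\gamma^+_r \stackrel{law}{=} |W_r|$. Hence
\begin{align*}
\mathbf{E}_0[(\gamma^+_r)^{p}] = (4r)^{p/2}\frac{\Gamma(\frac{p+1}{2})}{\Gamma(1/2)}, \qquad p = \beta/\alpha .
\end{align*}
This produces the factor $4^{\beta/(2\alpha)}\Gamma(\frac{\beta}{2\alpha}+\frac12)/\Gamma(1/2)$ in $C$, together with $r^{\beta/(2\alpha)}$. The normalisation of the local time here should be checked against the convention used in the $d=1$ discussion above; it only affects this constant.

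Collecting everything, the deficit is bounded by a constant times
\begin{align*}
\theta\int_0^t r^{\theta+\frac{\beta}{2\alpha}-1}(t-r)^{-\beta}\,dr = \theta\, t^{\theta+\frac{\beta}{2\alpha}-\beta}\,\frac{\Gamma(\theta+\frac{\beta}{2\alpha})\Gamma(1-\beta)}{\Gamma(\theta+\frac{\beta}{2\alpha}-\beta+1)} .
\end{align*}
This Beta integral is finite because $\beta<\alpha<1$. The factor $\Gamma(1-\beta)$ cancels the denominator coming from \eqref{momH}, and what remains is exactly $C\,t^\theta\,t^{\beta(\frac{1}{2\alpha}-1)}$. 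For $\theta=0$ both sides are trivial.

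The main point needing care is the event inclusion $\{V^{-1}_t<r\}\subset\{V_r\ge t\}$ for the right-continuous inverse, together with the identification of the law of $\gamma^+_r$ under the paper's normalisation. The rest is bookkeeping with Gamma functions.
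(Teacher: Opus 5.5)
Your proof is correct and follows essentially the same route as the paper. Both arguments write $t^\theta-\mathbf{E}_0[(V^{-1}_t)^\theta]$ as a tail integral, reduce it to $\mathbf{P}_0(H\circ(\eta/\sigma)\gamma^+_s\ge t-s)$, apply Markov's inequality with exponent $\beta$, combine \eqref{momH} with the half-normal moments of $\gamma^+_s$, and evaluate a Beta integral, which yields exactly the constant $C$. The only difference is harmless: you use the inclusion $\{V^{-1}_t<r\}\subset\{V_r\ge t\}$ where the paper writes an equality. That inclusion follows from the monotonicity of $V$ and the definition of $V^{-1}_t$ alone, which is fortunate, because $V\circ V^{-1}_t=t$ fails on the jump intervals of $V$ created by the jumps of $H$.
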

\begin{proof}
    Since $V_t \geq t$ we get that $V^{-1}_t \leq t$. Moreover, 
\begin{align*}
    \mathbf{E}_0[(V^{-1}_t)^\theta] 
    = & \int_0^t \theta s^{\theta -1} \mathbf{P}_0(V^{-1}_t > s) ds\\
    = & t^\theta - \int_0^t \theta s^{\theta -1} \mathbf{P}_0(V^{-1}_t \leq s) ds\\
    = & t^\theta - \int_0^t \theta s^{\theta -1} \mathbf{P}_0(t \leq V_s) ds\\
    = & t^\theta - \int_0^t \theta s^{\theta -1} \mathbf{P}_0(H \circ (\eta/\sigma) \gamma^+_s  \geq t-s) ds\\
    \geq & t^\theta - \mathbf{E}_0[(H_{(\eta/\sigma)})^\beta] \int_0^t \theta s^{\theta -1} \frac{\mathbf{E}_0[(\gamma^+_s)^{\beta/\alpha}]}{(t-s)^\beta} ds\\
    = & t^\theta - \mathbf{E}_0[(H_{(\eta/\sigma)})^\beta] 4^\frac{\beta}{2\alpha} \frac{\Gamma(\frac{\beta}{2\alpha} + \frac{1}{2})}{\Gamma(1/2)} \int_0^t \theta s^{\theta -1} \frac{s^\frac{\beta}{2\alpha}}{(t-s)^\beta} ds\\
    = & t^\theta - \mathbf{E}_0[(H_{(\eta/\sigma)})^\beta] 4^\frac{\beta}{2\alpha} \frac{\Gamma(\frac{\beta}{2\alpha} + \frac{1}{2})}{\Gamma(1/2)} \theta \frac{\Gamma(\theta + \frac{\beta}{2\alpha}) \Gamma(1-\beta)}{\Gamma(\theta + \frac{\beta}{2\alpha} - \beta +1 )} t^{\theta + \frac{\beta}{2\alpha} - \beta}\\
    = & t^\theta - 4^\frac{\beta}{2\alpha} \frac{\Gamma(\frac{\beta}{2\alpha} + \frac{1}{2})}{\Gamma(1/2)} \theta \frac{\Gamma(\theta + \frac{\beta}{2\alpha}) \Gamma(1-\frac{\beta}{\alpha})}{\Gamma(\theta + \frac{\beta}{2\alpha} - \beta +1 )} (\eta/\sigma)^{\beta/\alpha}  t^{\theta + \frac{\beta}{2\alpha} - \beta}\\
    = & t^\theta \big( 1 - C\, t^{\beta (\frac{1}{2\alpha} -1)}\big).
\end{align*}
\end{proof}

\paragraph{Sticky BM on metric graphs}
Let us define $\bar{X}^\dagger = \{\bar{X}^\dagger_t\}_{t\geq 0}$ as the process $\bar{X}$ on $[0, \ell)$ and killed at $\ell>0$. Write $\bar{\tau}_\ell = \inf\{t\,:\, \bar{X}_t=\ell\}$ and $\tau^+_\ell = \inf\{t\,:\, X^+_t=\ell\}$. Then, for $f \in C[0,\ell)$,
\begin{align*}
\mathbf{E}_x[f(\bar{X}^\dagger_t)] = \mathbf{E}_x[f(\bar{X}_t), t < \bar{\tau}_\ell] = \mathbf{E}_x[f(X^+ \circ V^{-1}_t), V^{-1}_t < \tau^+_\ell], \quad t\geq 0,\; x \in [0, \ell).
\end{align*}    
The process $\bar{X}^\dagger$ can be considered as the killed version of the process introduced \cite{BonDovStar}, that is a sticky Brownian motion on star graphs. In particular, given $\mathcal{E} := \{ \varepsilon=[0,\ell)  \}$ as the collection of rays given by the bounded intervals of the real line, we define the star graph $\mathsf{S}$ as the quotient space $\mathsf{S} = \mathcal{E}/\sim$, i.e., we identify the starting points on all edges and in $\mathcal{E}$ the origin $0 \equiv (\cdot, 0)$ is the unique point that belongs to all the rays. Such a point is identified as the vertex $\mathsf{v}$ of $\mathsf{S}$. We identify a point $\mathsf{x} \in \mathsf{S}$ as $\mathsf{x}=(\varepsilon, x)$ for the edge $\varepsilon$ and the distance $x$ from the star vertex. A sticky Brownian motion $\mathsf{Y}$ on $\mathsf{S}$ is therefore defined as the couple $(\varepsilon, \bar{X}^\dagger_t)$ on the edge $\varepsilon \in \mathcal{E}$.

The process $\bar{X}$ is Markov only away from $\{0\}$, thus $\mathsf{Y}$ is Markov on $\mathsf{S} \setminus \{\mathsf{v}\}$. For the sake of clarity we write the problem for $\mathsf{Y}$ on the star graph $\mathsf{S}$,
\begin{equation}
\left\lbrace
\begin{array}{ll}
\displaystyle \dot{u}(t, \mathsf{x}) = \mathsf{G} u(t, \mathsf{x}), & \mathsf{x} \in \mathsf{S} \setminus \{\mathsf{v}\}\\
\\
\displaystyle \eta \,D^\Phi_t u(t, \mathsf{v}) = \sum_{\varepsilon \in \mathcal{E}} \rho_\varepsilon\, u^\prime_\varepsilon(t, 0), & t>0, \quad \eta >0,\\  
\displaystyle u_\varepsilon(t, \ell) = 0, & t>0, \quad \varepsilon \in \mathcal{E},\\ 
\\
\displaystyle u(0, \mathsf{x}) = f(\mathsf{x}), & \mathsf{x} \in \mathsf{S}, \quad f \in C(\mathsf{S}).
\end{array}
\right.
\label{NLBVPGraphINTRO}
\end{equation}
where $\mathsf{G}$ is the Laplacian on $\mathsf{S}$ and $u_\varepsilon, u^\prime_\varepsilon$ are the projection of $u, u^\prime$ on the edge $\varepsilon \in \mathcal{E}$. For a rigorous setting we refer to \cite{BonDovStar} and \cite{BonColDovPag}. Here we provide new results for the local time of $\mathsf{Y}$ at the star vertex $\mathsf{v}$. Recall that $\gamma^+_t$ at $\{0\}$ up to the first hitting time $\tau_\ell$ is an exponential r.v. with $\mathbf{E}_x[\gamma^+ \circ \tau_\ell] = (\ell - x)$. Thus,
\begin{align*}
\mathbf{E}_x[\bar{\tau}_\ell] = \mathbf{E}_x[\tau_\ell] + (\eta/\sigma) \Phi^\prime(0) \mathbf{E}_x[\gamma^+ \circ \tau_\ell] = \frac{\ell^2 - x^2}{2} + (\eta/\sigma) \Phi^\prime(0) (\ell - x)
\end{align*}
and
\begin{align*}
\mathbf{E}_{\mathsf{v}} [\inf\{t\,:\, \mathsf{Y}_t \notin \mathsf{S} \}] = \frac{\ell^2}{2} + (\eta/\sigma) \Phi^\prime(0) \ell
\end{align*}
is the mean exit time from a star graph. With no abuse of notation we write 
\begin{align*}
\gamma_s(\mathsf{v}) = \int_0^t \mathbf{1}_{\{\mathsf{v}\}}(\mathsf{Y}_s) ds 
\end{align*}
and underline that the set $\{t\,:\, \mathsf{Y}_t = \mathsf{v}\}$ has positive Lebesgue measure.

\begin{theorem}
For $\alpha < 1/2$, as $t\to 0$, we have that 
\begin{align}
\mathbf{E}_{\mathsf{m}} \left[ \frac{1}{t} \int_0^t f(\mathsf{Y}_s) d\gamma_s(\mathsf{v}) \right] \to \int_{\mathsf{S}} f(\mathsf{x}) \mu(d\mathsf{x})
\label{RevMeasure}
\end{align}
where
\begin{align*}
\mu(d\mathsf{x}) = \sum_{\varepsilon \in \mathcal{E}}  \rho_\varepsilon\, \mu_\varepsilon(dx)
\end{align*}
is the Revuz measure of the local time $\gamma(\mathsf{v}) = \gamma(\cdot, 0)$ at the star vertex $\mathsf{v}$ of the process $\mathsf{Y}$ on $\mathsf{S}$ whereas $\mu_\varepsilon(dx) = \mu(\varepsilon, dx)$ can be associated with the local time $\gamma(\mathsf{v}) = \gamma(\varepsilon, 0)$ at $\mathsf{v}$ of $\mathsf{Y}$ on the edge $\varepsilon$.
\end{theorem}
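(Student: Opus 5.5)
We plan to reduce the statement to the one-dimensional sticky process $\bar X$ on each edge. We use the time-change representation $\mathsf{Y}_t=(\varepsilon,X^+\circ V^{-1}_t)$ with $V_t=t+H\circ(\eta/\sigma)\gamma^+_t$, and we control the short-time behaviour through Theorem \ref{thm:momentsInvVbar}.

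First we rewrite the occupation functional. Since $\gamma_t(\mathsf{v})=\int_0^t \mathbf{1}_{\{\mathsf{v}\}}(\mathsf{Y}_s)ds$ is the Lebesgue measure of the time spent at the vertex, we have
\begin{align*}
\int_0^t f(\mathsf{Y}_s)\,d\gamma_s(\mathsf{v}) = f(\mathsf{v})\,\gamma_t(\mathsf{v}).
\end{align*}
On each edge this occupation time equals $t-V^{-1}_t$. Indeed, $V^{-1}_t$ is the time the process spends away from the vertex, because $V$ grows at rate one off $\{0\}$ and the remaining time is absorbed by the jumps of $H\circ(\eta/\sigma)\gamma^+$. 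The quantity to study is therefore
\begin{align*}
\frac1t\,\mathbf{E}_{\mathsf{m}}\big[t-V^{-1}_t\big].
\end{align*}
Here the initial law $\mathsf{m}$ is decomposed edgewise with weights $\rho_\varepsilon$, and the edge measures $\mu_\varepsilon$ are identified from the contribution of each edge. The killing at $\ell$ plays no role as $t\to0$, because $\mathbf{P}_x(\tau_\ell<t)$ is exponentially small uniformly in $x$ on compacts away from $\ell$.

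Second, we apply Theorem \ref{thm:momentsInvVbar} with $\theta=1$ and some $\beta<\alpha$. This gives
\begin{align*}
0\le \frac1t\,\mathbf{E}_0[t-V^{-1}_t]\le C\,t^{\beta(\frac{1}{2\alpha}-1)}.
\end{align*}
The exponent $\beta(\frac1{2\alpha}-1)$ is positive exactly when $\alpha<1/2$, which explains the hypothesis. For starting points $x$ away from the vertex we condition on the hitting time of $0$, use the strong Markov property of $X^+$, and integrate against $\mathsf{m}$. The resulting averaged limit is then computed via Laplace transforms in $t$. These involve $\Phi(\lambda)/\lambda$ and the resolvent of the reflected BM at $0$, namely $\mathbf{E}_x[\int_0^\infty e^{-\lambda s}d\gamma^+_s]=e^{-\sqrt\lambda x}/\sqrt\lambda$. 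We then apply a Tauberian argument to read off the small-$t$ behaviour and identify it with $\int f\,d\mu$, with $\mu=\sum_\varepsilon\rho_\varepsilon\mu_\varepsilon$.

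The main obstacle is that the identification of the limit is not routine. The bound from Theorem \ref{thm:momentsInvVbar} only shows that the normalized vertex occupation vanishes when started at the vertex; it does not produce the nontrivial measure $\mu$. The true content of the statement is the Revuz correspondence on each edge: the limit must equal $\int f\,d\mu_\varepsilon$, with weights $\rho_\varepsilon$ coming from the flux condition in \eqref{NLBVPGraphINTRO}. To establish it, we would first verify the Revuz identity for additive functionals of $X^+$ under the stationary measure, where $\mu_\varepsilon$ corresponds to $\gamma^+$. We would then transfer it through the time change $V^{-1}$, using $\mathbf{E}[(H_s)^\beta]$ from \eqref{momH} to show that the distortion induced by $H$ is negligible when $\alpha<1/2$.
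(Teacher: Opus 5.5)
There is a genuine gap, and your own first reduction creates it. You identify the integrator with the vertex occupation time, so that on each edge the functional becomes $f(\mathsf v)\,(t-V^{-1}_t)$. Your second step then gives $\frac1t\mathbf{E}_0[t-V^{-1}_t]\le C\,t^{\beta(\frac{1}{2\alpha}-1)}\to0$. For a starting point $x>0$, the strong Markov property at the hitting time $T_0$ only bounds $\mathbf{E}_x[t-V^{-1}_t]$ by $\mathbf{E}_0[t-V^{-1}_t]\,\mathbf{P}_x(T_0<t)$, whose integral in $x$ is $O(\sqrt t)$. So along the route you set up, the left-hand side of \eqref{RevMeasure} tends to $0$ for every bounded $f$. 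No Laplace-transform or Tauberian computation can turn this into $\int f\,d\mu$ with $\mu$ the nonzero Revuz measure of $\gamma^+$. Your closing paragraph does not repair this: the Revuz correspondence for $\gamma^+$ concerns $\int f(X^+_s)\,d\gamma^+_s$, not $\int f(\bar X_s)\mathbf{1}_{\{0\}}(\bar X_s)\,ds$, and you never give an identity linking the two. As it stands the plan is internally inconsistent and stops before the limit is identified.

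The paper's proof takes the other reading. In the step where it invokes $\bar\gamma=\gamma^+\circ V^{-1}$, the integrator on each edge becomes the time-changed boundary local time, not the occupation time. Your suspicion about the occupation-time formula displayed before the theorem is therefore well founded: the statement needs this identification instead. The missing idea is the change of variables along the continuous non-decreasing clock $V^{-1}$,
\begin{align*}
\frac1t\int_0^{t} f_\varepsilon(\bar X_s)\,d\bar\gamma_s(0)=\frac{V^{-1}_t}{t}\cdot\frac{1}{V^{-1}_t}\int_0^{V^{-1}_t} f_\varepsilon(X^+_s)\,d\gamma^+_s(0),
\end{align*}
with $t$ replaced by $t\wedge\bar\tau_\ell$. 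Theorem \ref{thm:momentsInvVbar} is then used exactly as in your second step, but to conclude $V^{-1}_t/t\to1$, i.e.\ that the time change is asymptotically the identity when $\alpha<1/2$. The remaining ingredients are $V^{-1}_t\le t$, hence $V^{-1}_t\to0$, and the Revuz limit $\frac1s\mathbf{E}_{\mathsf m}\big[\int_0^s f_\varepsilon(X^+_r)\,d\gamma^+_r(0)\big]\to\int f_\varepsilon\,d\mu^+$ on each edge, weighted by $\rho_\varepsilon$; together these give \eqref{RevMeasure}. No Laplace transforms, Tauberian theorem or separate use of \eqref{momH} is needed, since the moment estimate for $H$ is already packaged in Theorem \ref{thm:momentsInvVbar}.
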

\begin{proof}
Recall that $\mathsf{Y}_0 = (\varepsilon, \bar{X}^\dagger_0)$ and $\mathsf{S} \ni \mathsf{x} = (\varepsilon, x)$ with $\varepsilon \in \mathcal{E}$, $x \in [0, \ell)$. Moreover, the zero set of $\bar{X}$ has positive Lebesgue measure. Observe that $\mathsf{v}=(\cdot, 0),$  
\begin{align*}
\mathbf{E}_{\mathsf{m}} \left[ \frac{1}{t} \int_0^t f(\mathsf{Y}_s) d\gamma_s(\mathsf{v}) \right]
= & \int_{\mathsf{S}}\frac{1}{t} \int_0^t  \mathbf{E}_{\mathsf{x}}[f(\mathsf{Y}_s) \mathsf{1}_{\{\mathsf{v}\}}(\mathsf{Y}_s)] \, ds\, \mathsf{m}(d\mathsf{x})\\
 &( \textrm{ recall that } \{t\,:\, \bar{X}_t=0\} \textrm{ has positive Lebesgue measure})\\
= & \sum_{\varepsilon \in \mathcal{E} }  \rho_\varepsilon \int_0^\ell  \frac{1}{t} \int_0^t    \mathbf{E}_x[f_\varepsilon(\bar{X}^\dagger_s) \mathbf{1}_{\{0\}}(\bar{X}^\dagger_s)] \, ds\, dx\\
& (\textrm{ recall that } \bar{X} = X \circ V^{-1} \textrm{ and } \bar{\gamma} = \gamma^+ \circ V^{-1})\\
= & \sum_{\varepsilon \in \mathcal{E} }  \rho_\varepsilon \int_0^\ell  \mathbf{E}_x \left[ \frac{1}{t} \int_0^{t \wedge \bar{\tau}_\ell} f_\varepsilon(\bar{X}_s) d\bar{\gamma}_s(0) \right]\, dx\\
= & \sum_{\varepsilon \in \mathcal{E} }  \rho_\varepsilon \int_0^\ell  \mathbf{E}_x \left[ \frac{V^{-1}_{t \wedge \bar{\tau}_\ell}}{t} \frac{1}{V^{-1}_{t \wedge \bar{\tau}_\ell}} \int_0^{V^{-1}_{t \wedge \bar{\tau}_\ell}} f_\varepsilon(X^+_s) d\gamma^+_s(0) \right]\, dx.
\end{align*}
Now observe that $V^{-1}_t \leq t$ a.s. and $V^{-1}_t/t \to 1$ as $t \to 0$ if $\alpha < 1/2$. Indeed, (see Theorem \ref{thm:momentsInvVbar})
    \begin{align}
         \big( 1 - C\, t^{\beta (\frac{1}{2\alpha} -1)}\big) \leq \mathbf{E}_0[(V^{-1}_t / t)^\theta] \leq 1
         \label{fMomInvVbarINproof}
    \end{align}
with $\theta>0$, $\beta < \alpha$. 

Finally, we underline that
\begin{align}
0 \leq \bar{V}^{-1}_t \leq t \textrm{ a.s. implies }  \mathbf{P}(\lim_{t \to 0} \bar{V}^{-1}_t = 0)=1
\end{align}
and, as $t \to 0$,
\begin{align*}
\mathbf{E}_x \left[ \frac{1}{\bar{V_t}} \int_0^{\bar{V}_t} f_\varepsilon(X^+_s) d\gamma^+_s(0)\right] \to \int_0^\ell f_\varepsilon(x) \mu^+(dx)
\end{align*}
where $\mu^+$ is the Revuz measure associated with $\gamma^+$ at $\{0\}$. That is, the local time $\gamma^+(0)$ of $X^+$ on $[0, \ell)$. Thus, as $t \downarrow 0$, we get \eqref{RevMeasure}.
\end{proof}

\paragraph{Sticky BM on domains in the class $\mathcal{D}$}

We extend the result in \cite{DovSticky2} for smooth domains to  not smooth domains.

A real-valued function $f$ defined on $A \subset  \R^d$ is called Lipschitz with constant $\lambda < \infty $ if
$|f(x)- f(y)| \leqslant  \lambda |x - y|$ for all $x, y \in A.$  A domain D is called Lipschitz if there exist $r > 0$ and and $\lambda  < \infty$ such that for every $x \in  \partial D,$ the set $D \cap  B(x, r)$ is the graph of a Lipschitz
function with constant $\lambda$ in some orthonormal coordinate system. We call $(\lambda , r)$ the Lipschitz
characteristics of $D.$ 

We recall  the definition of the class $\mathcal{D}$  (see  \cite{BBC}).

\begin{definition}
We will say that a domain $D$ belongs to class $\mathcal{D}$ if there exists an increasing
sequence of domains $D_n \subset D$ with the following properties.\begin{itemize}
\item (i) Each $D_n$ is a Lipschitz domain with characteristics $(\lambda, r_n)$ and $\cup_{n=1}^\infty D_n = D.$
\item (ii) For every $n\geq 1,$ the set $\partial D_n \cap  \partial D$ is a subset of the relative interior of $\partial D_{n+1} \cap  \partial D.$
\item (iii) $\sup_{n\geqslant 1} |\partial D_n| < \infty$  and $\lim_{n\to\infty} |\partial D_n \setminus  \partial D| = 0.$
\end{itemize}
\end{definition}

The set $\partial_L D = \cup_n \partial D_n \cap \partial D$ will be called the Lipschitz part of $\partial D.$
Every bounded Lipschitz domain is in the class  $\mathcal{D}$.
For domains $D\in   \mathcal{D}$ which are not Lipschitz see Examples 3.4, 3.6, 4.13, 4.15 and 4.16 in \cite{BBC}.

Now we recall the construction of a reflecting Brownian motion on a non-smooth domain $D$ (see, for example, \cite{BBC}).

\begin{theorem}
If $D \in  \mathcal{D}$, then reflecting Brownian motion $X^+$ in $D$ starting from $x \in D \cap \partial_L D $  has a semimartingale decomposition 
\begin{align}
X^+_t = x + W_t + N_t, \quad t\geq 0
\end{align}
where $W_t$ is a d-dimensional Brownian motion, $N_t =\int_0^t n(X_s)d\gamma^+_s$ and the local time $\gamma^+$, is a non-decreasing continuous process that does not increase when $X^+$ is not in $\partial L_D$, that is 
\begin{align*}
\int_0^\infty  1_{ (\partial_L D)^c }n(X^+_t)d\gamma^+_t=0
\end{align*}
(here $n$ is the unit inward normal vector field of $D$ on $\partial_L D)$.
The Revuz measure of $\gamma^+$ for the process $X^+$ is the surface measure on $\partial_L D$.
\end{theorem}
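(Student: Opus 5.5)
The plan is to build $X^+$ from Dirichlet forms and obtain the semimartingale decomposition through the approximating Lipschitz domains $D_n$. First I take the regular Dirichlet form $(\mathcal{E},\mathcal{F})$ with $\mathcal{E}(u,v)=\frac12\int_D\nabla u\cdot\nabla v\,dx$ and $\mathcal{F}=W^{1,2}(D)$ on $L^2(\bar D, dx)$; more precisely, on the Martin--Kuramochi type compactification if $\bar D$ is not adequate. This requires that $W^{1,2}(D)\cap C(\bar D)$ be dense in $W^{1,2}(D)$, which I expect to get from the exhaustion $D_n\uparrow D$ in the class $\mathcal{D}$. The associated Hunt process is the reflecting Brownian motion $X^+$.

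Next, by the Fukushima decomposition for the coordinate functions $u_i(x)=x_i$, which lie in $\mathcal{F}_{loc}$, I write $X^+_t-x=M_t+N_t$. Here $M$ is a martingale additive functional with $\langle M^i,M^j\rangle_t=\delta_{ij}t$, since the energy measure of $u_i$ is $dx$. By Lévy's characterization $M=W$ is a $d$-dimensional Brownian motion. The zero-energy part $N$ is then identified by Green's formula. For $v\in C^1_c$ supported in a neighbourhood where $\partial D$ coincides with $\partial D_n$ (Lipschitz), I have $\mathcal{E}(u_i,v)=-\frac12\int_{\partial D}n_i v\,d\sigma$. Hence on such sets $N^i$ is a continuous additive functional of bounded variation, with signed Revuz measure $n_i\,\sigma|_{\partial_L D}$ (up to the factor $\frac12$ absorbed in the normalisation of $\gamma^+$). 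Defining $\gamma^+$ as the positive continuous additive functional with Revuz measure $\sigma|_{\partial_L D}$ gives $N_t=\int_0^t n(X^+_s)\,d\gamma^+_s$. This uses the Revuz correspondence and the fact that $\sigma|_{\partial_L D}$ is a smooth measure, which holds by condition (iii), $\sup_n|\partial D_n|<\infty$, together with the local trace inequality on Lipschitz pieces.

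The main obstacle is showing that $N$ carries no contribution from the non-Lipschitz part $\partial D\setminus\partial_L D$, i.e. that $N$ really is of bounded variation globally and charges only $\partial_L D$. For this I would use the localisation $X^{(n)}$, reflecting BM in $D_n$, for which the Skorokhod decomposition is classical (Bass--Hsu) with local time having Revuz measure the surface measure of $\partial D_n$. I would couple $X^+$ and $X^{(n)}$ up to the exit from $D_n\cup(\partial D_n\cap\partial D)$. Then I would show that the time $\tau_n$ at which $X^+$ hits $\partial D_n\setminus\partial D$ tends to $\infty$ (or to the lifetime) as $n\to\infty$. This follows from $|\partial D_n\setminus\partial D|\to0$: the capacity of these sets goes to zero, so the hitting probabilities vanish quasi-everywhere, and then everywhere for starting points in $D\cup\partial_L D$ by absolute continuity of the transition density. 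Condition (ii) guarantees that the local times on successive $D_n$ are consistent.

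Patching these pieces yields the decomposition for all $t$, together with the support property $\int_0^\infty\mathbf 1_{(\partial_L D)^c}(X^+_t)\,d\gamma^+_t=0$. Finally, the Revuz measure statement follows by the same localisation, letting $n\to\infty$ in the identity for $D_n$ with monotone convergence.
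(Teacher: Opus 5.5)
The paper gives no proof of this statement; it quotes it from Bass--Burdzy--Chen. Your argument therefore has to stand on its own. Steps 2 and 3 are fine. The step you single out as the main obstacle, however, rests on a claim that is false for general $D\in\mathcal{D}$: that the hitting times $\tau_n$ of $\partial D_n\setminus\partial D$ increase to $\infty$.

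Note that $\partial D\setminus\partial_L D$ lies outside every $\overline{D_n}$. So $\tau_n\uparrow\infty$ essentially says that $X^+$ never reaches the non-Lipschitz part of the boundary. The theorem asserts nothing of the kind; it says only that $\gamma^+$ does not charge that set. Condition (iii) cannot give it either, because it controls surface measure, not capacity. Also, $X^+$ is recurrent, so $\mathbf{P}_x(\tau_n<\infty)=1$ for every $n$; at most $\mathbf{P}_x(\tau_n\le T)$ could tend to $0$.

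Here is a domain in $\mathcal{D}$ where your claim fails. Let $K\subset[0,1]$ be compact with $0,1\in K$, zero length and positive logarithmic capacity. Write $[0,1]\setminus K=\bigcup_j I_j$ and put $D=\{(x,y):0<x<1,\ f(x)<y<1\}$, with $f=0$ on $K$, $f=-h_j$ on $I_j$, and $\sum_j h_j<\infty$. Let $D_n$ be the polygon made of $(0,1)\times(\epsilon_n,1)$ and the teeth $I_j\times(-h_j,-\delta_n)$, $j\le n$, joined to it through slightly narrower necks. With $\epsilon_n$, $\delta_n$ and the narrowing tending to $0$ fast enough, (i)--(iii) hold and $\partial D\setminus\partial_L D=K\times\{0\}$.

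Restricting admissible functions to $(0,1)^2$ and reflecting evenly across $\{y=0\}$ shows that $K\times\{0\}$ has positive capacity for $X^+$. By irreducibility and recurrence, $X^+$ hits it almost surely. Every path to it crosses the cuts $\partial D_n\setminus\partial D$, hence $\sup_n\tau_n<\infty$ a.s. In this example the cuts have length tending to $0$, but they separate $K\times\{0\}$ from $D_1$, so their capacities stay bounded below. Consequently the patching, and the final Revuz-measure step that relies on it, breaks down: beyond $\lim_n\tau_n$ your localisation gives no information.

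The fix is not to localise in time but to prove Green's identity globally, which is exactly what (iii) is designed for. For $v\in C^1_c(\R^d)$ and $\nu$ the outward normal,
\[
\int_D\partial_i v\,dx=\lim_{n\to\infty}\int_{\partial D_n}v\,\nu^{(n)}_i\,d\sigma=\int_{\partial_L D}v\,\nu_i\,d\sigma .
\]
This holds because the part of $\partial D_n$ outside $\partial D$ contributes at most $\|v\|_\infty|\partial D_n\setminus\partial D|\to0$. Meanwhile $\partial D_n\cap\partial D\uparrow\partial_L D$ and $\sigma(\partial_L D)\le\sup_n|\partial D_n|<\infty$ give dominated convergence.

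Hence $D$ has finite perimeter with perimeter measure $\sigma|_{\partial_L D}$, and this measure is smooth by the local trace inequalities; that is, $D$ is a strong Caccioppoli set. Fukushima's criterion for the zero-energy part to have bounded variation (as in Chen's theorem on strong Caccioppoli sets), applied to your Step 2, then gives $N_t=\int_0^t n(X^+_s)\,d\gamma^+_s$ for all $t\ge0$. It also gives $\int_0^\infty\mathbf{1}_{(\partial_L D)^c}(X^+_t)\,d\gamma^+_t=0$ simply because the Revuz measure does not charge $(\partial_L D)^c$, whether or not $X^+$ visits $\partial D\setminus\partial_L D$. Localisation to the Lipschitz domains $D_n$ (Bass--Hsu) is then needed only to pass from quasi-every to every starting point in $D\cup\partial_L D$.

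Two smaller points. First, the density of $W^{1,2}(D)\cap C(\overline{D})$ in $W^{1,2}(D)$ is only asserted, not proved. Second, with $\mathcal{E}=\frac12\int\nabla u\cdot\nabla v$ and $W$ standard, the Revuz measure of $\gamma^+$ is $\frac12\sigma|_{\partial_L D}$. The factor cannot simply be absorbed; getting exactly the surface measure requires the normalisation with generator $\Delta$.
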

Now for $D \in \mathcal{D}$ we consider the problem 
\begin{align}
\left\lbrace
\begin{array}{l}
\displaystyle \frac{\partial u}{\partial t}(t,x) = \Delta u(t,x), \quad t>0, \; x \in D, \\
\\
\displaystyle \eta D^\Phi_t Tu(t,x) = - \sigma \partial_{\bf n} u(t,x) - c \, u(t,x), \quad t>0, \; x \in \partial D,\\
\\
\displaystyle u(0,x) = f(x), \quad x \in \overline{D}.
\end{array}
\right .
\label{probXbar}
\end{align}
where $\partial_{\bf n}\varphi$ is the outer normal derivative with respect to $m(dx) = \mathbf{1}_D dx + \mathbf{1}_{\partial D} d\mathcal{H}$ (the sum of the $d$-dimensional Lebesgue measure supported on the interior and the Hausdorff measure supported on the boundary). We denote by $Tu=u|_{\partial D}$ the trace function (continuous operator) from $H^1(D)$ into $L^2(\partial D, \mathcal{H})$ for $u \in H^1(D)\cap C(\overline{D})$. We will use the following space
\begin{align*}
\bar{D}_L = \bigg\{ & \varphi: (0, \infty) \times \overline{D} \to \mathbb{R} \textrm{ with } \phi= T\varphi \textrm{ such that } \\ 
 &  \dot{\phi} \in C((0, \infty) \times \partial D)  \textrm{ and }  \dot{\phi}(s, x) \kappa(t-s) \in L^1(0, t),\, \forall\, x \in \partial D,\,  t>s>0 \bigg\}
\end{align*}
and the fact that $\bar{D}_L=\bar{D}_L(D)$ will be considered below.

\begin{theorem}
The solution $u \in C((0, \infty), D) \times \bar{D}_L$ to \eqref{probXbar} has the representation
\begin{align}
\mathbf{E}_x\left[ f(\bar{X}_t) \right] = & \mathbf{E}_x\left[f(X^+ \circ \bar{V}^{-1}_t) \exp \left( - (c/\sigma)\, \gamma^+ \circ \bar{V}^{-1}_t \right) \right], \quad t>0,\; x \in \overline{D}
\label{repXbar}
\end{align}
where $\bar{V}^{-1}_t$ is the inverse to 
\begin{align}
\bar{V}_t = t + H \circ (\eta/\sigma) \gamma^+_t, \quad t\geq 0.
\end{align} 
\end{theorem}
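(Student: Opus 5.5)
The plan is to extend the argument of \cite{DovSticky2} from smooth domains to $D \in \mathcal{D}$, arguing through the Laplace transform in time. The only place smoothness entered there is the semimartingale (Skorokhod) decomposition of $X^+$ and the identification of the Revuz measure of $\gamma^+$ with surface measure. For $D\in\mathcal{D}$ both facts are supplied by the preceding theorem, with $\partial_L D$ in place of $\partial D$. I would first treat $c=0$ and then insert the multiplicative functional $\exp(-(c/\sigma)\gamma^+)$, which is a Feynman--Kac weight carried by the boundary local time.

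\textbf{Step 1 (time-changed resolvent).} Define $w(t,x)$ as the right-hand side of \eqref{repXbar} and compute $\int_0^\infty e^{-\lambda t} w(t,x)\,dt$. Substitute $t=\bar V_s$, so that $dt = ds + d(H\circ(\eta/\sigma)\gamma^+)_s$. Then condition on $X^+$ and use the independence of $H$, which gives $\mathbf{E}[e^{-\lambda H_r}]=e^{-r\Phi(\lambda)}$. On excursion intervals of $H$ the process $X^+\circ\bar V^{-1}$ sits at $X^+_s\in\partial D$. This should yield
\begin{align*}
\int_0^\infty e^{-\lambda t} w(t,x)dt = \mathbf{E}_x\Big[\int_0^\infty e^{-\lambda s - (\eta\Phi(\lambda)/\sigma + c/\sigma)\gamma^+_s} f(X^+_s)\,\big(ds + \tfrac{\eta}{\sigma}\tfrac{\Phi(\lambda)}{\lambda} d\gamma^+_s\big)\Big].
\end{align*}
This is the same computation as in the one-dimensional case \eqref{FBVP-realLine}. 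The jumps of $H$ contribute the factor $\Phi(\lambda)/\lambda$ through \eqref{Symbol}.

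\textbf{Step 2 (identify the resolvent problem).} The right-hand side is the resolvent of $X^+$ killed by the additive functional $(\eta\Phi(\lambda)+c)\gamma^+/\sigma$, evaluated against the measure $dx + (\eta/\sigma)(\Phi(\lambda)/\lambda)\mathcal{H}|_{\partial D}$. By the preceding theorem the Revuz measure of $\gamma^+$ is $\mathcal{H}$ on $\partial_L D$. The Dirichlet form of the killed process is therefore
\[
\mathcal{E}(u,v)=\int_D\nabla u\cdot\nabla v\,dx+\frac{\eta\Phi(\lambda)+c}{\sigma}\int_{\partial D}Tu\,Tv\,d\mathcal{H}
\]
on $H^1(D)$. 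Via Itô's formula for the decomposition $X^+=x+W+N$ and the trace operator $T$, this means that $\tilde u(\lambda,\cdot)$ solves $\lambda\tilde u-\Delta\tilde u=f$ in $D$ together with the boundary condition
\[
\eta\,\frac{\Phi(\lambda)}{\lambda}\big(\lambda T\tilde u - Tf\big) = -\sigma\partial_{\bf n}\tilde u - cT\tilde u.
\]
This boundary condition is exactly the Laplace transform of \eqref{probXbar}, using $\mathcal{L}[D^\Phi_t u](\lambda)=\frac{\Phi(\lambda)}{\lambda}(\lambda\mathcal{L}[u]-u_0)$. Membership of $u$ in $\bar D_L$ is what makes this transform formula legitimate.

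\textbf{Step 3 (uniqueness and inversion).} Coercivity of $\mathcal{E}$ on $H^1(D)$ gives uniqueness of the weak resolvent solution. The trace inequality $\|Tu\|_{L^2(\partial D)}\le C\|u\|_{H^1}$ is needed here, and it holds for $D\in\mathcal{D}$ by approximating with the Lipschitz domains $D_n$ and using (iii) together with $|\partial D_n\setminus\partial D|\to0$. Then the Laplace transforms of $u$ and $w$ coincide for all $\lambda>0$. Continuity in $t$ of both functions gives $u=w$.

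The main obstacle is Step 2 on non-smooth boundaries. There one must justify that the normal derivative in \eqref{probXbar}, understood weakly with respect to $m$, matches the local-time term. One must also justify that the non-Lipschitz part $\partial D\setminus\partial_L D$ carries no boundary mass. I would handle this by running the argument on $D_n$ and passing to the limit with (ii)--(iii). This yields $\mathcal{H}(\partial D\setminus\partial_L D)$-negligibility in the Revuz sense, following \cite{BBC}.
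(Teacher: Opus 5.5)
\paragraph{How your route compares with the paper's.} Your route is genuinely different from the paper's. The paper recomputes nothing. It takes the smooth-domain representation of \cite{DovSticky2} as a black box and transfers it to $D\in\mathcal{D}$ by invoking well-posedness of the Skorokhod equation $X^+_t=x+W_t+\int_0^t n(X^+_s)\,d\gamma^+_s$ from \cite{BBC05}: strong existence and pathwise uniqueness for Lipschitz constant $<1$, and weak uniqueness for planar domains. This fixes the pair $(X^+,\gamma^+)$ in law.

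You instead verify the identity directly. Your Step 1 (conditioning on $X^+$, then the compensation formula for the jumps of $H$) is correct and produces exactly the extra term $\frac{\eta}{\sigma}\frac{\Phi(\lambda)}{\lambda}\,d\gamma^+_s$. Integrating the resulting weak form by parts gives $\eta\frac{\Phi(\lambda)}{\lambda}(\lambda T\tilde u-Tf)=-\sigma\partial_{\bf n}\tilde u-cT\tilde u$ with the right sign. This buys you independence from any uniqueness theory for the Skorokhod equation, since the law of $(X^+,\gamma^+)$ is already determined by the Neumann form and the Revuz correspondence. So there is no restriction to small Lipschitz constants or to $d=2$. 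You also get an actual uniqueness argument for \eqref{probXbar}, which the paper takes for granted. The paper's route is shorter, but only as strong as the results it cites.

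\paragraph{The gap: which boundary measure appears.} The step that does not close as written is the one you flag yourself. The Revuz measure of $\gamma^+$ is surface measure $\nu$ on $\partial_L D$ only. So Steps 1--2 yield the form with boundary term $\frac{\eta\Phi(\lambda)+c}{\sigma}\int_{\partial_L D}Tu\,Tv\,d\nu$, not the term $\int_{\partial D}\cdots d\mathcal{H}$ you wrote. In other words, you get the dynamic condition on $\partial_L D$ and only a natural (weak Neumann) condition on $\partial D\setminus\partial_L D$.

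Approximating by $D_n$ cannot repair this. Conditions (ii)--(iii) control the free boundaries $\partial D_n\setminus\partial D$, not the size of $\partial D\setminus\partial_L D$, and the limit only returns the Revuz identification already stated in the preceding theorem. In fact $\mathcal{H}(\partial D\setminus\partial_L D)=0$ is not implied by (i)--(iii). In $d\ge 3$ one can build $D\in\mathcal{D}$ whose boundary contains a flat square of positive area that is reached only by the caps of disjoint thin tubes of summable lateral area, and no Lipschitz $D_n\subset D$ touches that square. So you must either add $\mathcal{H}(\partial D\setminus\partial_L D)=0$ as a hypothesis, or read \eqref{probXbar} with the boundary condition imposed on $\partial_L D$. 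The paper makes the same identification silently in ``given the couple $(X^+,\gamma^+)$''.

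\paragraph{Two smaller points.}
\begin{enumerate}
\item Your trace inequality via the $D_n$ is not justified, because the characteristics $(\lambda,r_n)$ allow $r_n\to 0$, so the trace constants need not be uniform. You do not need it, though. The boundary term is nonnegative, so coercivity holds on the form domain $H^1(D)\cap L^2(\partial_L D,\nu)$ of the Neumann form perturbed by the smooth measure $\nu$. The functional $\phi\mapsto\int f\,T\phi\,d\nu$ is bounded there because $\eta\Phi(\lambda)+c>0$ and $\nu(\partial_L D)\le\sup_n|\partial D_n|<\infty$.
\item The resolvent identification gives $u(t,\cdot)=w(t,\cdot)$ only almost everywhere in $x$. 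Reaching every $x\in\overline{D}$ also requires continuity of $w(t,\cdot)$. Moreover, the statement is only meaningful for starting points in $D\cup\partial_L D$, where the decomposition is available.
\end{enumerate}
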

\begin{proof}
The sticky Brownian motion on smooth domains $\Omega_\mathsf{S} \subset \mathbb{R}^d$, $d > 1$ with non-local behavior on the boundary has been investigated in \cite{DovSticky2}. Observe that this result is concerned with the probabilistic representation of $u$ with compact representation on $L^2(\overline{\Omega}_\mathsf{S},m) = L^2(\Omega_\mathsf{S}) \oplus L^2(\partial \Omega_\mathsf{S})$. Let us introduce the space $\bar{D}_L=\bar{D}_L(\Omega_\mathsf{S})$. The solution $u \in C((0, \infty), \overline{\Omega}_\mathsf{S}) \cap \bar{D}_L$ to the FBVP \eqref{probXbar} with $f \in C(\overline{\Omega}_\mathsf{S})$ has the representation \eqref{repXbar} in terms of an elastic Brownian motion on $\Omega_\mathsf{S}$ (see \cite{DovSticky2}).

Consider now the domain $D \subset \mathcal{D}$. For the Skorokhod equation in $\bar{D}$
\begin{align*}
X^+_t = x + W_t + \int_0^t n(X^+_s) d\gamma^+_s
\end{align*}
we have strong existence and path uniqueness of the solution in case the Lipschitz constant is strictly less than 1. Moreover, we have weak uniqueness in case of planar domains. Such results have been proved in \cite{BBC05} under some additional regularity conditions for the boundary local time $\gamma^+$. We are therefore justified to deal under equivalence in law. Given the couple $(X^+, \gamma^+)$ we are able to solve \eqref{probXbar} in the space $C((0, \infty), \overline{D}) \cap \bar{D}_L(D)$.
\end{proof}

\section{Trap behaviours}

Beside the characterization of the domain, it is well-known that a motion can exhibit anomalous behaviour leading to random traps as an endogenous property (it can not be charged to the geometry of the domain). Despite the fact that a particle can not change its nature, the observation of that particle can be given in an unknown medium, then we only see a motion affected by some anomalies.\\

\subsection{Characterization in terms of reflected BM}\label{RBM}

Let us consider the time change $\tau$ and the (continuous) reflecting motion $Z = \{Z_t \}_{t \geq 0}$ on $\bar{D}$, a non empty open set of $\mathbb{R}^d$ with $d\geq 1.$ 

Let $B(x,\rho)$ be the ball centered at $x \in \bar{D}$ with positive radius $\rho$ and such that $B(x,\rho) \cap \bar{D}$ and $B^c(x, \rho) \cap \bar{D}$ are not empty. We assume that $D$ is non-trap for $Z$ according with Definition \ref{defTrapd}. For the exit time $T_B(Z)$ now we need all the details and write $T_{B(x, \rho)} (Z) := \inf \{t\,:\, Z_t \notin B(x,\rho)\}$. Observe that $\forall\, \rho>0$,
\begin{align*}
\sup_{x \in D}  \mathbf{E}_x[T_{B(x,\rho)}(Z)] = \sup_{x \in D} \mathbf{E}[T_{B(x,\rho) \cap \bar{D}}(Z)\, |\, Z_0=x ].
\end{align*}

\begin{definition}
We say that $Z$ has a trap behavior in $\bar{D}$ if
\begin{align*}
\exists\, \rho >0\, :\, \sup_{x \in D}  \mathbf{E}_x[T_{B(x,\rho)}(Z)] = \infty.
\end{align*} 
\label{defTrapb}
\end{definition}

\begin{remark}
    If $Z$ equals in law $X^\tau:=X^+ \circ \tau$, then we say that $\tau$ introduces a trap behavior of $X^+$ on $D$.
\end{remark}

\begin{remark}
    If $D$ is a trap domain for a reflected Brownian motion, then the reflected Brownian motion has a trap behavior in $D$. Moreover, if we have a trap behavior in $D$, then there exists a non-empty set $\Lambda \subseteq \overline{D}$ such that the reflected Brownian motion has a trap behavior in $\Lambda$.  
\end{remark}

\paragraph{On the Koch snowflakes}

We now show that $\bar{X}$  on $\bar{D}$ exhibits a trap behaviour on $\bar{D}$.

\begin{theorem}
Let $B \subset \bar{D}$ with $x \in B$.
\begin{itemize}
\item[i)] Assume $B \cap \partial D = \emptyset$, then $\mathbf{E}_x[T_B(\bar{X})] = \mathbf{E}_x[T_B(X^+)]$;
\item[ii)] Assume $B \cap \partial D \neq \emptyset$, then $\mathbf{E}_x[T_B(\bar{X})] < \infty$ only if $\Phi^\prime(0)< \infty$.
\end{itemize}
\label{thm:TrapBehBarX}
\end{theorem}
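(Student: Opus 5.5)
We rely on the time-change representation $\bar{X}_t = X^+\circ \bar{V}^{-1}_t$ with $\bar{V}_t = t + H\circ(\eta/\sigma)\gamma^+_t$, where $H$ is independent of $X^+$ and $\gamma^+$ is the boundary local time of $X^+$, whose Revuz measure is the surface measure on $\partial_L D$. Since $\bar V$ is continuous and strictly increasing along the path of $X^+$, the exit time of $\bar X$ from $B$ is obtained by pushing the exit time of $X^+$ through $\bar V$:
\begin{align*}
T_B(\bar{X}) = \bar{V}\circ T_B(X^+) = T_B(X^+) + H\circ \big((\eta/\sigma)\gamma^+_{T_B(X^+)}\big).
\end{align*}
To justify this identity, note that $\bar X_t \in B$ if and only if $X^+_{\bar V^{-1}_t}\in B$, and $\bar V^{-1}_t < T_B(X^+)$ if and only if $t<\bar V\circ T_B(X^+)$. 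This is the same argument the paper already used in the one-dimensional case. Both items will follow from taking expectations in this identity.

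For i), if $B\cap\partial D=\emptyset$, then $X^+$ started at $x\in B$ does not touch $\partial D$ before $T_B(X^+)$. Since $\gamma^+$ increases only on $\partial D$, we get $\gamma^+_{T_B(X^+)}=0$ a.s. Because $H_0=0$, the identity reduces to $T_B(\bar X)=T_B(X^+)$ pathwise. Taking expectations gives the claim. Equivalently, before $T_B(X^+)$ the process $X^+$ is a free Brownian motion in $B$, so $\mathbf{E}_x[T_B(X^+)]<\infty$.

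For ii), condition on $X^+$ and use the independence of $H$ together with $\mathbf{E}[H_s]=s\,\Phi'(0)$, which follows from $\Phi(\lambda)/\lambda\to\Phi'(0)$ as $\lambda\downarrow 0$ by \eqref{Symbol}. This yields
\begin{align*}
\mathbf{E}_x[T_B(\bar X)] = \mathbf{E}_x[T_B(X^+)] + (\eta/\sigma)\,\Phi'(0)\,\mathbf{E}_x\big[\gamma^+_{T_B(X^+)}\big].
\end{align*}
It then suffices to show that $\mathbf{E}_x[\gamma^+_{T_B(X^+)}]\in(0,\infty]$ whenever $B\cap\partial D\neq\emptyset$. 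Once this holds, $\Phi'(0)=\infty$ forces $\mathbf{E}_x[T_B(\bar X)]=\infty$, and finiteness is possible only if $\Phi'(0)<\infty$. The convention $\infty\cdot c=\infty$ for $c>0$ applies here.

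The main obstacle is this strict positivity of the expected boundary local time before exit. My plan is to use the Revuz correspondence: $\mathbf{E}_x[\gamma^+_{T_B}]=\int_{\partial_L D\cap B} g_B(x,y)\,\mathcal{H}(dy)$, where $g_B$ is the Green function of $X^+$ killed on exiting $B$ (reflected on $\partial D$). This Green function is strictly positive on $B\cap\bar D$ by irreducibility of the reflected motion. The remaining requirement is that $B\cap\partial_L D$ has positive surface measure. For $D\in\mathcal{D}$ I would obtain this by taking $B$ to meet the Lipschitz part, which is dense enough by property (ii) of the class $\mathcal{D}$. A cruder alternative is to note that with positive probability $X^+$ hits $\partial D\cap B$ before exiting $B$, and that $\gamma^+$ grows immediately after such a hit. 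Either route gives $\mathbf{P}_x(\gamma^+_{T_B}>0)>0$, and hence $\mathbf{E}_x[\gamma^+_{T_B}]>0$.
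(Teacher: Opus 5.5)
Your route is the paper's. The paper also obtains $\mathbf{E}_x[T_B(\bar X)]=\mathbf{E}_x[\bar V\circ T_B(X^+)]=\mathbf{E}_x[T_B(X^+)]+(\eta/\sigma)\Phi'(0)\,\mathbf{E}_x[\gamma^+\circ T_B(X^+)]$. It drops the local-time term in i) because $\gamma^+$ can only grow once $X^+$ has reached $\partial D$, and in ii) it simply declares the remaining term $b(x)$ strictly positive.

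One correction to your set-up: $\bar V$ is not continuous when $H$ has jumps, which is exactly the stable case $\Phi'(0)=\infty$ that matters here. It jumps each time $(\eta/\sigma)\gamma^+$ passes a jump time of $H$, and these jumps are the sticky holding periods. It is $\bar V^{-1}$ that is continuous, and the exact pathwise identity is $T_B(\bar X)=\bar V_{T_B(X^+)-}$. This coincides a.s.\ with $\bar V\circ T_B(X^+)$, because $H$ is independent of $X^+$ and has no fixed jump times, so your expectation formula is unaffected.

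The step you flag as the main obstacle is a genuine gap, and the paper's proof is equally silent there. Neither of your two routes gives $\mathbf{E}_x[\gamma^+_{T_B(X^+)}]>0$ for the ball $B$ given in the hypothesis:
\begin{itemize}
\item You are not free to choose $B$ so that it meets $\partial_L D$. Property (ii) of $\mathcal{D}$ is a nesting condition and says nothing about density of $\partial_L D$ in $\partial D$.
\item The Green function $g_B(x,\cdot)$ of the motion killed on leaving $B$ is positive only on the part of $B\cap\bar D$ reachable from $x$ without leaving $B$ (e.g.\ the component of $B\cap D$ containing $x$). It is not positive on all of $B\cap\bar D$.
\item The hitting argument fails when $\partial D\cap B$ is polar, or when it lies in $\partial D\setminus\partial_L D$, where $\gamma^+$ puts no mass.
\end{itemize}

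In fact ii) is false without an extra assumption. The punctured disk $D=\{0<|z|<1\}\subset\mathbb{R}^2$ belongs to $\mathcal{D}$: take $D_n=\{r_n<|z|<1\}$ with $r_n\downarrow 0$. For $B=B(0,1/2)\subset\bar D$ we have $B\cap\partial D=\{0\}$, a point planar Brownian motion never hits. Hence $\gamma^+_{T_B(X^+)}=0$ a.s., and $\mathbf{E}_x[T_B(\bar X)]=\mathbf{E}_x[T_B(X^+)]<\infty$ for every $\Phi$, including $\Phi(\lambda)=\lambda^\beta$.

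What your Revuz-measure identity $\mathbf{E}_x[\gamma^+_{T_B(X^+)}]=\int_{B\cap\partial_L D}g_B(x,y)\,\mathcal{H}(dy)$ does prove is ii) under an added hypothesis. You need the component of $B\cap D$ containing $x$ to have a boundary piece in $\partial_L D$ of positive surface measure. State that hypothesis explicitly rather than try to derive it from $B\cap\partial D\neq\emptyset$.
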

\begin{proof}
Let us consider the ball $B \subset \bar{D}$. For $X^+$ on $\bar{D}$ with $X^+_0=x \in B$ we consider the killing time $T_B(X^+)$. Analogously, for $\bar{X}$ on $\bar{D}$ with $\bar{X}_0 =x \in B$ we consider $T_B(\bar{X})$. For $x \in B$,
\begin{align*}
\int_0^\infty \mathbf{E}_x[\mathbf{1}(X^+ \circ V^{-1}_t),\, V^{-1}_t < T_B(X^+)] dt = & \int_0^\infty \mathbf{E}_x[\mathbf{1}(\bar{X}_t),\, t< V \circ T_B(X^+)] dt \\ 
= & \int_0^\infty \mathbf{E}_x[\mathbf{1}(\bar{X}_t),\, t < \bar{T}_B(\bar{X})] dt
\end{align*}
and we get
\begin{align*}
\mathbf{E}_x[T_B(\bar{X})] = \mathbf{E}_x[V \circ T_B(X^+)].
\end{align*}
That is, 
\begin{align*}
\mathbf{E}_x[T_B(\bar{X})] = \mathbf{E}_x[T_B(X^+)] + (\eta/\sigma) \mathbf{E}_0[H_t]\, \mathbf{E}_x[\gamma^+ \circ T_B(X^+)]
\end{align*}
where $\mathbf{E}_0[H_t] = \Phi^\prime (0)$. We underlined that $\mathbf{E}_x[\gamma^+ \circ T_B(X^+)] = a(x) + b(x)$ is written in terms of
\begin{align*}
a(x) : = \mathbf{E}_x[\gamma^+ \circ T_B(X^+), T_B(X^+) < T_{D}(X^+)] = 0\\
b(x) : = \mathbf{E}_x[\gamma^+ \circ T_B(X^+), T_B(X^+) \geq T_{D}(X^+)] >0.
\end{align*}
Indeed, $\gamma^+_t >0$ a.s. only if $t \geq T_{D}(X^+)$. Thus, in case $B \cap \partial D = \emptyset$ and $\bar{X}_0=x \in B$, we simply get $\mathbf{E}_x[T_B(\bar{X})]= \mathbf{E}_x[T_B(X^+)]$. If $B \cap \partial D \neq \emptyset$, then for $\bar{X}_0=x \in B$ we have 
\begin{align*}
\mathbf{E}_x[T_B(\bar{X})] = \mathbf{E}_x[T_B(X^+)] + \Phi^\prime(0)\, b(x), \quad x \in B
\end{align*} 
which is finite only if $\Phi^\prime(0)< \infty$.

\end{proof}

Our results also provide useful application as the following theorem entails. Let us assume $c=0$. Then, $\bar{X}$ on $\overline{\Omega}_\mathsf{K}$ is a reflecting Brownian motion with non-local dynamic boundary condition. Let us consider $X^+$ on $\overline{\Omega}_\mathsf{M}$ as defined in Section \ref{sec:TrapDom}. We recall that $\alpha \in (2,4)$ is the scale factor associated with both domains $\Omega_\mathsf{K}$ and $\Omega_\mathsf{M}$.\\

\begin{theorem}
Let $\beta \in (0,1)$. Let $\Phi(\lambda) = \lambda^{\beta}$ characterize the sticky process $\bar{X}$. Let $w(\beta, 1-2/\alpha) = \exp(-(1-2/\alpha)^{-(2/\beta)})$ characterize the trap domain $\Omega_\mathsf{M}$. Then, according to Definition \ref{defTrapb}  the processes $\bar{X}$ on $\overline{\Omega}_\mathsf{K}$ and $X^+$ on $\overline{\Omega}_\mathsf{M}$ exhibit the same trap behaviour depending on $\beta$. 
\end{theorem}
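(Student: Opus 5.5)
The idea is to reduce both sides to the same dichotomy in the parameter $\beta$ and check that the thresholds coincide. The opening $w(\beta,a)=\exp(-a^{-2/\beta})$ is $w(\gamma,a)$ of Theorem \ref{KM} with $\gamma=2/\beta$. For the side of $\bar{X}$ on $\overline{\Omega}_\mathsf{K}$ I will use Theorem \ref{thm:TrapBehBarX}; for the side of $X^+$ on $\overline{\Omega}_\mathsf{M}$ I will use Theorem \ref{KM} together with the remark that a trap domain for reflected Brownian motion gives a trap behaviour in the sense of Definition \ref{defTrapb}.

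\emph{Step 1 (sticky side).} Let $\Phi(\lambda)=\lambda^\beta$ with $\beta\in(0,1)$. Then $\Phi'(0)=\lim_{\lambda\downarrow0}\beta\lambda^{\beta-1}=\infty$, while $\Phi'(0)=1<\infty$ in the limiting case $\beta=1$, where $\bar X$ is Markov and the boundary condition is classical. Take $\rho>0$ and $x\in\overline{\Omega}_\mathsf{K}$ with $B(x,\rho)\cap\partial\Omega_\mathsf{K}\neq\emptyset$. The proof of Theorem \ref{thm:TrapBehBarX} gives
\begin{align*}
\mathbf{E}_x[T_B(\bar X)]=\mathbf{E}_x[T_B(X^+)]+(\eta/\sigma)\Phi'(0)\,b(x),
\end{align*}
with $b(x)>0$. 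I will argue that $b(x)>0$ on this set: started near the boundary, the reflected motion charges boundary local time before leaving $B$ with positive probability. Hence the supremum in Definition \ref{defTrapb} is infinite for every $\beta<1$. For $\beta=1$ it is finite, because $\Omega_\mathsf{K}$ is non-trap and the quantities $\mathbf{E}_x[\gamma^+\circ T_B]$ are bounded by the uniform bounds available for non-trap domains in \cite{BCM}.

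\emph{Step 2 (Koch side).} Set $\gamma=2/\beta$. For $\beta\in(0,1)$ we have $\gamma>2$, so Theorem \ref{KM} makes $\Omega_\mathsf{M}$ a trap domain for $X^+$. By the remark following Definition \ref{defTrapb}, $X^+$ then has a trap behaviour on $\overline{\Omega}_\mathsf{M}$. At the endpoint $\beta=1$ we get $\gamma=2$, which Theorem \ref{KM} also classifies as trap; this is a mismatch with Step 1.

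\emph{Step 3 (matching).} For every $\beta\in(0,1)$ both processes exhibit a trap behaviour, so they coincide on the stated parameter range. Beyond that, I will try to identify the degree of trapping as a function of $\beta$. The expected time contributed by the boundary scales like $\mathbf{E}[H_t^{\beta'}]\sim t^{\beta'/\beta}$, finite only for $\beta'<\beta$ by \eqref{momH}. On the Koch side the analogous quantity is the series $\sum_k (a^{2-\gamma})^k$ from the proof of Theorem \ref{KM}, with $2-\gamma=2-2/\beta$. I will compare these two rates.

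\emph{Main obstacle.} The hardest step is making the ``same trap behaviour depending on $\beta$'' statement quantitative, that is, showing that the two infinite expectations diverge through the same $\beta$-dependent mechanism rather than merely both being infinite. My first attempt is to compare truncated moments $\mathbf{E}_x[T_B^{\theta}]$. On the sticky side these are controlled by Theorem \ref{thm:momentsInvVbar} and \eqref{momH}, finite exactly for $\theta<\beta$. On the Koch side I would estimate the moments of the hitting time through the hyperbolic-block sum $\sum n^{\theta}|D_n|$ with $\gamma=2/\beta$, and check that the finiteness threshold is again $\theta<\beta$.
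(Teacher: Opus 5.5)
Your Steps 1 and 2, together with the first sentence of Step 3, are the paper's proof. Since $\Phi'(0)=\infty$ for $\Phi(\lambda)=\lambda^\beta$, Theorem \ref{thm:TrapBehBarX} makes $\mathbf{E}_x[T_B(\bar X)]$ infinite for balls meeting $\partial\Omega_\mathsf{K}$. Since $\gamma=2/\beta>2$, Theorem \ref{KM} makes $\Omega_\mathsf{M}$ a trap domain, which is then turned into a trap behaviour in the sense of Definition \ref{defTrapb}. The paper does this by taking a ball containing $B_0$ that meets $\partial\Omega_\mathsf{M}$; you do it by citing the remark after Definition \ref{defTrapb}. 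In the paper, ``the same trap behaviour depending on $\beta$'' means only that both processes are trapping for every $\beta\in(0,1)$, so your argument is already complete at that point.

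What you call the main obstacle is not part of the statement, and the plan you sketch for it would fail.

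On the $\Omega_\mathsf{M}$ side, $\mathbf{E}_x[T_{B_0}(X^+)]<\infty$ for each fixed $x$ (\cite[Lemma 3.2]{BCM}, quoted after Definition \ref{defTrapd}). By Jensen, $\mathbf{E}_x[T_{B_0}(X^+)^\theta]<\infty$ for all $\theta\le 1$. So at a fixed starting point the finiteness threshold there is at least $1$, not $\beta$.

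The trapping of $X^+$ in $\Omega_\mathsf{M}$ lives only in the supremum over starting points. For $\bar X$, by contrast, the expectation is infinite pointwise because $\mathbf{E}[H_1]=\Phi'(0)=\infty$. These are different mechanisms. If you instead mean suprema over $x$, you have no tool: the hyperbolic-block criterion only decides whether $\sup_x\mathbf{E}_x[T_{B_0}]$ is finite, and there is no $\sum n^\theta|D_n|$ version to invoke.

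Your claim that $\bar X$ is non-trapping at $\beta=1$ is also unsupported. Non-trapness in \cite{BCM} controls hitting times of an interior ball, not the boundary local time $\gamma^+\circ T_B$. You are right that Theorem \ref{KM}, as stated, classifies $\gamma=2$ as trap, which conflicts with the paper's own remark for $\beta\to1$. But $\beta=1$ lies outside the theorem, so simply drop the endpoint discussion.
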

\begin{proof}
From Theorem \eqref{thm:TrapBehBarX} we know that the process $\bar{X}$ exibits a trap behaviour according to Definition \ref{defTrapb}. That is, for $B \subset \overline{\Omega}_\mathsf{K}$, 
\begin{align*}
\mathbf{E}[T_B(\bar{X})\,| \, \bar{X}_0=x \in B]< \infty \quad \textrm{if} \quad B \cap \partial \Omega_\mathsf{K} = \emptyset
\end{align*}
whereas
\begin{align*}
\mathbf{E}[T_B(\bar{X})\,| \, \bar{X}_0=x \in B] = \infty \quad \textrm{if} \quad B \cap \partial \Omega_\mathsf{K} \neq \emptyset
\end{align*}
and we say that $\bar{X}$ exhibits a trap behaviour on $\overline{\Omega}_\mathsf{K}$. Observe that, for the Dirichlet ball $B_0 \subset \Omega_\mathsf{K}$ with $B_0 \cap \partial \Omega_\mathsf{K} = \emptyset$, according to Definition \ref{defTrapb}, $\bar{X}$ exhibits a trap behaviour on $\overline{\Omega}_\mathsf{K} \setminus B_0$. That is,
\begin{align*}
\exists\, \rho \, :\, \sup_{x \in \overline{\Omega}_\mathsf{K} \setminus B_0} \mathbf{E}[T_{B(x, \rho)} (\bar{X})\, | \, \bar{X}_0=x \in B(x, \rho)] = \infty. 
\end{align*}
This is true if $B(\cdot, \rho) \cap \partial \Omega_\mathsf{K} \neq \emptyset$ for some $\rho >0$. 

On the other hand, from Section \ref{sec:TrapDom} we know that $\Omega_\mathsf{M}$ is a trap domain, that is, given the Dirichlet ball $B_0 \subset \Omega_\mathsf{M}$ with $B_0 \cap \partial \Omega_\mathsf{M}= \emptyset$, we have 
\begin{align*}
\sup_{x \in \bar{\Omega}_\mathsf{M} \setminus B_0} \mathbf{E}_x[T_{B_0}(X^+)] = \infty.
\end{align*} 
Analogously, by Definition \ref{defTrapb},
\begin{align*}
\exists\, \rho \, :\, \sup_{x \in \overline{\Omega}_\mathsf{M} \setminus B_0} \mathbf{E}[T_{B(x, \rho)} (X^+)\, | \, X^+_0=x \in B(x, \rho)] = \infty.
\end{align*}
Indeed, we can chose a ball $B(\cdot, \rho) \supset B_0$ such that $B(\cdot, \rho) \cap \partial \Omega_\mathsf{M} \neq \emptyset$ for some $\rho >0$.
\end{proof}

\begin{remark}
($\beta\to 1$) We observe that $\bar{X}$ behaves like a Brownian motion under Feller-Wentzell boundary condition. The holding times of $\bar{X}$ on the boundary are exponential random variables and the boundary occupation time is finite. Indeed, $\Omega_\mathsf{M}$ with $w(1, 1-2/\alpha) = \exp(-(1-2/\alpha)^{-2}) < 1/\alpha$ still maintain portion of the walls and the process $X^+$ can spend some extra time near the boundary. However, $\Omega_\mathsf{M}$ turns out to be non-trap.
\end{remark}

\begin{remark}
    ($\beta \to 0$) In this case, due to the long jumps of $H$ the process $\bar{X}$ spends more time near the boundary (the average is infinite as well as $\beta < 1$). Concerning $X^+$ on $\Omega_\mathsf{M}$ we have that the window $w$ becomes smaller and smaller.  
\end{remark}

\paragraph{On the metric graph}

We stress the fact that the arguments above can be considered for a locally compact metric space.

 We are mainly interested in the case of metric graph $\mathsf{G}$  obtained as a collection of star graphs $\mathsf{S}$. Assume $\mathsf{G}$ is given by $(\mathcal{E}, \mathcal{V})$ where the set of vertices contains a non-empty set of trap vertices, say $\mathcal{V_\mathsf{trap}} \subset \mathcal{V}$. Thus, we still obtain a characterization in terms of trap behaviour on $\mathsf{G}$.  The trap vertices are defined as the set of vertices for which a non-local dynamic condition can be specified. For a given ball $B=B(x, \rho)$, the process $\bar{X}$ on $\mathsf{G}$ with $\bar{X}_0\in B$ satisfies 
\begin{align*}
\mathbf{E}_{(\varepsilon, x)}[T_B(\bar{X})] = \mathbf{E}_{(\varepsilon, x)}[V \circ T_B(X^+)] 
\end{align*}
that is, as discussed above, 
\begin{equation}
\mathbf{E}_\mathsf{v}[T_B(\bar{X})] \sim \frac{\rho^2}2 + (\eta/\sigma) \Phi^\prime(0)\rho  , \quad \mathsf{v}\in  \mathcal{V}_\mathsf{trap}\end{equation}
whereas in the case   $ B \cap \mathcal{V} = \emptyset$
\begin{equation}
\mathbf{E}_{(\varepsilon, x)}[T_B(\bar{X})] =\frac{\rho^2-x^2}2 \quad \varepsilon\in \mathcal{E}.\end{equation}

We underline the fact that this formulation can be related with the Fontes-Isopi-Newman model (the so-called FIN  model, see  \cite{FIN})  and the Bouchaud trap's model for which
\begin{align}
    V_t = t + \sum_j \eta_j\, \gamma_t(\mathsf{v_j}) = \int_{\mathsf{G}}\, \gamma^+_{t,z} m(dz) 
\end{align}
where
\begin{align}
    m(dz) = dz + \sum_j \eta_j \, \delta_j(dz) 
\end{align}
where $\delta_j(dz) = \delta_{\mathsf{v}_j}(dz)$ is the Delta measure and $\{\mathsf{v}_j\}_j = :\mathcal{V}_\mathsf{trap} \subset \mathcal{V}$ is the set of sticky points (vertices) in $\mathsf{G}$.

\subsection{Characterization in terms of killed BM}
We now give a characterization in terms of killed BM.

In the following,  we write $h(s) \simeq f(s)$ if there exist constants $c_1,c_2 > 0$ such that $c_1 f(s) \leq h(s) \leq c_2f (s)$ for the specified range of the argument s.

\begin{definition}
Let $Z^\dagger =\{Z_t, \, t < T_D(Z)\}$ be the part process on the bounded domain $D$ of $Z= \{Z_t\}_{t \geq 0}$ with $Z_0=x \in D$. If there exists $\beta \in (0, 1)$ such that
\begin{align}\label{hc}
\int_{D} \mathbf{P}_x(T_D(Z^\dagger) \leq t) \, dx   \simeq   t^\frac{\beta}{2} \quad \textrm{as } t \to 0+ 
\end{align}
then we say that $Z$ exhibits a trap behaviour on $D$.
\label{defTraphc}
\end{definition}

Our reference model is given by the BM on the Dirichlet ball for which condition \eqref{hc} holds for $\beta=1$.

\begin{remark}\label{TAU}     If $Z$ equals in law the time-changed killed BM $X^\tau=X^\dagger \circ \tau$, then we say that $\tau$ introduces a trap behavior of $X^\dagger$ on $D$.
\end{remark}

The motivation of such characterization is based on the study of heat content
$Q(t)$  that serves as a global functional capable of decoding the geometry of a domain $\Omega$ through thermal dynamics.

We now recall some fundamental results.
  Let $D \subset \mathbb{R}^d$ be a bounded domain with boundary $\partial D$. We consider the initial-boundary value problem with Dirichlet conditions.
The standard heat equation is given by:
\begin{equation}
    \partial_t u(x,t) =\Delta u(x,t), \quad u(x,0)=1, \quad u(x,t)|_{\partial D}=0
\end{equation}
The heat content $Q(t)$ is the integral of the solution over the domain
\begin{equation}
    Q(t) = \int_{D} u(x, t) \, dx
\end{equation}
and the total amount of heat lost up to the moment $t$ is 
\begin{equation}
    Q_{loss}(t) = \int_{D} (1-u(x, t)) \, dx.
\end{equation}
The literature  regarding the heat content asymptotics for domains with Dirichlet boundary conditions  is huge.
If  $D\subset \R^n$ is a bounded connected domain with a $C^3$-regular boundary $\partial D$, then (as $t\to 0+$) the heat content of $D$ is determined up to the third order by 
$$
Q(t) = |D| - \sqrt{t}\frac{2}{\sqrt{\pi}} \mathcal{H}^{n-1}(\partial D)
+t\frac{n-1}{2}\int_{\partial D} H(x)\mathcal{H}^{n-1}(dx)+O(t^\frac{3}{2}),
$$
where $H(x)$ denotes the mean curvature at $x\in\partial D$, see \cite{Vdb94}.
In particular if $D\subset \R^2$  is a bounded connected domain with a $C^3$-regular boundary $\partial D$   \begin{equation}
    Q(t) = |D|  - \frac{2\sqrt{t}}{\sqrt{\pi}} |\partial D|  +  \int_{\partial D} H(s) \, ds \cdot t + O(t^{3/2}).
\end{equation}
We point out that the  third term is proportional to the integral of the mean curvature $H(s):$ 
this term explains how the \lq\lq bending" of the boundary accelerates or decelerates heat loss compared to a flat surface. When $\partial D$ contains corners (for example,  polygons), curvature $H$ is undefined at the vertices. In this case, 
when $D \subset \R^2$ is a bounded connected domain with a polygonal boundary $\partial D$, the following expansion, as $t\to 0+$, is given up to some exponential error by 
\begin{align*}
 Q(t) = |D|  -\sqrt{t}\frac{2}{\sqrt{\pi}}  |\partial D|  +t\sum_{i=1}^\ell c(\gamma_i)+O(e^{-\frac{r}{t}}),
\end{align*}
where $\ell$ is the number of vertices of $\partial\Omega$, $\gamma_i$, $i=1,\ldots, \ell$, are the interior angles at these vertices and 
\begin{align*}
    c(\gamma_i)=4\int_0^{+\infty} \frac{ \sinh ((\pi-\gamma_i)z)}{\sinh(\pi z)\cosh(\gamma_i z)}dz
\end{align*}
(see  \cite{Vdb90}). Each corner with interior angle $\gamma_i$ contributes a term proportional to $t$. The total angular contribution is $\sum c(\gamma_i)t$. Acute angles ($\gamma < \pi$) increase the rate of heat loss due to the proximity of multiple \lq\lq cold" boundaries.

Now we consider  the  Koch snowflake which has   a boundary with infinite length and non-integer dimension $d_f=\log 4/\log 3$.
 Because the perimeter is infinite, the standard $\sqrt{t}$ term is replaced by a power law dependent on the Minkowski dimension $d_f$ of the boundary. More precisely, when $D\subset \R^2$ is the  Koch snowflake $ \Omega_\mathsf{K}$ (with $a=\frac13,$) the  following  asymptotic expression holds as $t\to 0+$
$$ 
Q_{\Omega_\mathsf{K}}(t) = |\Omega_\mathsf{K}|-t^{\frac{2-d_f}{2}}p(\ln t) - t q(\ln t) + O(e^{-\frac{1}{1152t}}),$$
 as $t\to 0+$, where $d_f=\log 4/\log 3$ is the Minkowski dimension of $\partial  \Omega_\mathsf{K}$ and $p$ and $q$ are continuous $(\ln 9)$-periodic functions (see \cite{FLV}). The  previous expansion  is remarkable in the sense that it contains only a finite number of asymptotic terms with an exponential remainder estimate.
  In this respect the properties of the snowflake are somewhat similar to those of polygons, but while in sufficiently smooth boundaries the  heat content expansion  is in powers of $t^{1/2}$, fractal boundaries introduce non-integer powers and log-periodic oscillations. \\
  Then, by using the result in  \cite{FLV} we say that the heat diffusion on  $\Omega_\mathsf{K}$ exhibits a trap behaviour  according Definition \ref{hc} with $\beta={2-d_f}.$

  We  now consider in the plane    the following  Caputo fractional diffusion with  $\beta\in(0,1)$  of a disk  $\Omega_C$ of radius $R$
\begin{equation} \label{CD}
   D_t^{\beta} u(x,t) = \Delta u(x,t), \quad x \in \Omega_C \quad u(x,0)=1, \quad u(x,t)|_{\partial\Omega_C}=0.\end{equation}

  The corresponding time changed process  can be regarded as a prototype  as discussed in Remark \eqref{TAU} and it provides useful informations as the following theorem entails.

  \begin{theorem}\label{THC} Let $\Phi(\lambda) = \lambda^{\beta}$ characterize the time changed  process $X^L$  on $ \Omega_C.$
 According to Definition \ref{defTraphc},  the processes $X$ on ${\Omega}_\mathsf{K}$ and  the  time changed process $X^L$  on $ \Omega_C$   exhibit the same trap behaviour depending  on $\beta=2 - d_f$. 
\end{theorem}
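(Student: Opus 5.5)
We must check that both processes satisfy condition \eqref{hc} of Definition \ref{defTraphc} with the same exponent $\beta = 2-d_f$. The argument splits into two halves, one for each process, which are then compared.

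\textbf{The Koch snowflake.} For $X$ on $\Omega_\mathsf{K}$ killed on the boundary, $\mathbf{P}_x(T_D(X^\dagger)>t)=u(x,t)$ solves the Dirichlet heat problem with $u_0=1$. Hence
\[
\int_{\Omega_\mathsf{K}}\mathbf{P}_x(T_D\le t)\,dx=|\Omega_\mathsf{K}|-Q_{\Omega_\mathsf{K}}(t).
\]
By the expansion recalled from \cite{FLV} this equals
\[
t^{\frac{2-d_f}{2}}p(\ln t)+t\,q(\ln t)+O\big(e^{-1/(1152t)}\big).
\]
The function $p$ is continuous and $(\ln 9)$-periodic, so it is bounded. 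It is also strictly positive, since the heat loss dominates the volume of a tubular neighbourhood of width $\sqrt t$, which has Minkowski content of order $t^{(2-d_f)/2}$. So $p$ lies between two positive constants. Because $(2-d_f)/2<1$, the remaining terms are $o(t^{(2-d_f)/2})$. This gives $\simeq t^{\beta/2}$ with $\beta=2-d_f$.

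\textbf{The time-changed disk.} For $X^L$ on $\Omega_C$, the representation \eqref{sol-time-frac-problem} gives $\mathbf{P}_x(\zeta^L>t)=\mathbf{P}_x(\zeta>L_t)$ by independence of $L$ and $X$. Write $F(s):=\int_{\Omega_C}\mathbf{P}_x(\zeta\le s)\,dx=|\Omega_C|-Q_{\Omega_C}(s)$. Then
\[
\int_{\Omega_C}\mathbf{P}_x(\zeta^L\le t)\,dx=\mathbf{E}_0[F(L_t)].
\]
The disk is smooth, so the expansion from \cite{Vdb94} yields $F(s)\simeq s^{1/2}$ for small $s$. We also have $F\le |\Omega_C|$ for all $s$.

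We split $\mathbf{E}_0[F(L_t)]$ on the events $\{L_t\le s_0\}$ and $\{L_t>s_0\}$, with $s_0$ small and fixed.
\begin{itemize}
\item On $\{L_t\le s_0\}$, the estimate $F(s)\simeq s^{1/2}$ together with \eqref{meanLstable} at exponent $1/2$ gives a contribution comparable to $\mathbf{E}_0[(L_t)^{1/2}]=\frac{\Gamma(3/2)}{\Gamma(\beta/2+1)}\,t^{\beta/2}$. Formula \eqref{meanLstable} applies here because $1/2<1/\beta$.
\item On $\{L_t>s_0\}$, the contribution is at most $|\Omega_C|\,\mathbf{P}(L_t>s_0)=|\Omega_C|\,\mathbf{P}(H_{s_0}<t)$.
\end{itemize}

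For the upper bound we only need $F(s)\le c\,s^{1/2}$ for all $s$, which holds after enlarging the constant using boundedness of $F$. For the lower bound we restrict to the first event and bound the tail of the second.

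\textbf{Main obstacle.} The hard step is showing that the tail $\mathbf{P}(H_{s_0}<t)$ is negligible, i.e. $o(t^{\beta/2})$. I would use the scaling $H_{s_0}\overset{d}{=}s_0^{1/\beta}H_1$ together with the fact that a stable variable has exponentially small left tail: $\mathbf{P}(H_1<\epsilon)\le \exp(-c\,\epsilon^{-\beta/(1-\beta)})$. This can be obtained by Chebyshev applied to $\mathbf{E}[e^{-\lambda H_1}]=e^{-\lambda^\beta}$, with an optimised choice of $\lambda$. The resulting bound is much smaller than $t^{\beta/2}$.

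Combining the two halves, both functionals are $\simeq t^{(2-d_f)/2}$ once we choose $\beta=2-d_f\in(0,1)$. Definition \ref{defTraphc} then gives the same trap behaviour for both processes.
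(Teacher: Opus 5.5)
Your argument is correct and has the same overall shape as the paper's: you show that each heat-loss functional is $\simeq t^{(2-d_f)/2}$. The two halves, however, are obtained differently.

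For the disk, the paper quotes Theorem 4.6 of \cite{KP}, namely $Q^\beta_{loss,\Omega_C}(t)=\frac{2\pi R}{\Gamma(1+\beta/2)}t^{\beta/2}+O(t^\beta)$. You instead derive the estimate from three ingredients:
\begin{itemize}
\item the subordination identity $\int_{\Omega_C}\mathbf{P}_x(\zeta^L\le t)\,dx=\mathbf{E}_0[F(L_t)]$;
\item the classical $s^{1/2}$ heat loss of a smooth domain;
\item the moment formula \eqref{meanLstable}.
\end{itemize}
This is self-contained and more general: any domain with $F(s)\simeq s^{\gamma}$ yields $t^{\beta\gamma}$ for $X^L$. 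If you keep the exact leading coefficient $\frac{2}{\sqrt{\pi}}\,2\pi R$ of $F$, a little more care even reproduces the constant of \cite{KP}, since $\frac{2}{\sqrt{\pi}}\Gamma(3/2)=1$. What you give up is the $O(t^\beta)$ remainder, which the theorem does not need.

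For the snowflake, the paper quotes the two-sided bound (0.7) of \cite{FLV}. Starting from the full expansion instead obliges you to show $\inf p>0$. Your tube argument does this, provided you supply two facts:
\begin{itemize}
\item a uniform bound $\mathbf{P}_x(T_D\le t)\ge c$ whenever $d(x,\partial\Omega_\mathsf{K})\le\sqrt t$, which holds in a planar Jordan domain by a winding argument;
\item an inner Minkowski lower bound $\gtrsim\epsilon^{2-d_f}$ for the $\epsilon$-neighbourhood of the boundary inside $\Omega_\mathsf{K}$.
\end{itemize}
This is essentially van den Berg's proof of (0.7), so quoting it is shorter, but your route is sound.

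The step you single out as the main obstacle is dispensable.
\begin{itemize}
\item The upper bound already follows from $F(s)\le c\,s^{1/2}$ for all $s$.
\item For the lower bound, drop $\{L_t>s_0\}$ and use $L_t\overset{d}{=}t^{\beta}L_1$. This gives $\mathbf{E}_0[L_t^{1/2};L_t\le s_0]=t^{\beta/2}\,\mathbf{E}_0[L_1^{1/2};L_1\le s_0t^{-\beta}]\ge \tfrac12 t^{\beta/2}\mathbf{E}_0[L_1^{1/2}]$ for small $t$, by monotone convergence.
\end{itemize}
If you do subtract the complementary event, the quantity to control is $\mathbf{E}_0[L_t^{1/2};L_t>s_0]$, not $\mathbf{P}(L_t>s_0)$. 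Cauchy--Schwarz combined with your stable left-tail bound handles it.
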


\begin{proof}

For the  Koch snowflake  $\Omega_\mathsf{K}$  
we have that there exists a constant $C_K>0$  such that $$ C^{-1 }_K t^{\frac{2-d_f}{2}}  \leq Q_{loss, \Omega_\mathsf{K}}(t) \leq C_K t^{\frac{2-d_f}{2}}  $$ (see formula (0.7) in \cite{FLV}).

Now we consider the heat content for the  Caputo fractional diffusion \eqref{CD}.

By using the results for 
spectral heat content  in the framework of  time-changed
Brownian motions when the time changes are inverse subordinators,  we 
have  the following asymptotic expansion of
the spectral fractional heat content  as $t\to 0+$  (see Theorem 4.6 in \cite{KP}) 
   \begin{equation} Q^\beta_{  \Omega_C}(t) = \pi R^2     -\frac{2}{\sqrt{\pi}}    \frac{ {\Gamma(3/2)}}{\Gamma(1 + \beta/2)} 2\pi R\, t^{\frac{\beta}{2}}+ O(t^\beta)= \pi R^2     -  \frac{1}{\Gamma(1 + \beta/2)} 2\pi R\, t^{\frac{\beta}{2}}+ O(t^\beta).\end{equation}

Then we obtain that  as $t\to 0+$ for the solution $u$ to  the  Caputo fractional diffusion \eqref{CD}
 $$ Q^\beta_{loss, \Omega_C}(t)= \int_{\Omega_C} (1-u(x, t)) \, dx=    \frac{ 1}{\Gamma(1 + \beta/2)} 2\pi R\, t^{\frac{\beta}{2}}+ O(t^\beta).$$

Therefore, by choosing $\beta = 2 - d_f$ ,
$Q_{loss, \Omega_\mathsf{K}}(t) $ and $ Q^\beta_{loss, \Omega_C}(t)$ satisfy the same condition  \eqref{hc}.

\end{proof}

The previous Theorem \ref{THC}  says that, by mapping the fractal dimensions of the Koch snowflake onto a fractional exponent $\beta$, a  circle can be considered thermally indistinguishable from a fractal according the condition  \eqref{hc}.

\begin{figure}
    \centering
    \includegraphics[width=0.5\linewidth]{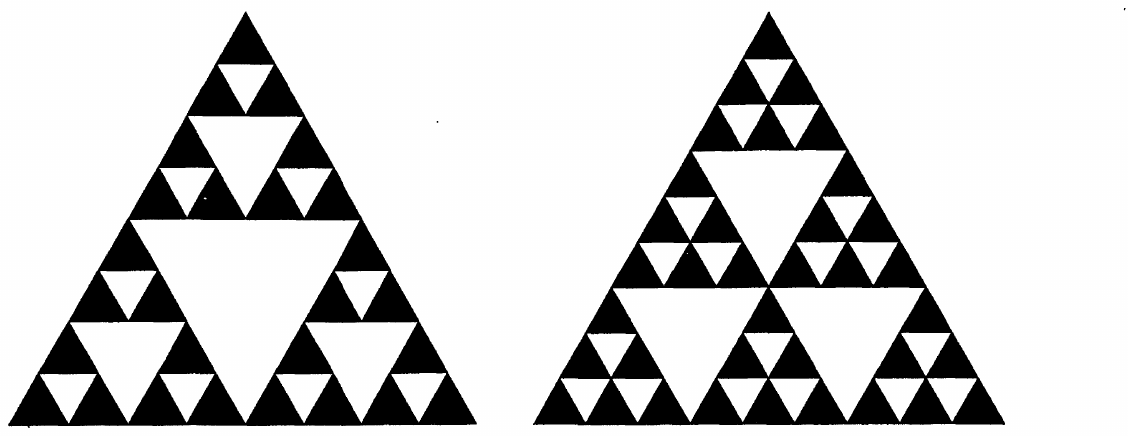}
    \caption{Sierpinski fractals $SG(2)$ and $SG(3)$}
    \label{fig:enter-label}
\end{figure}

\section{Sub-diffusive behaviours}

We recall that anomalous diffusion refers to a  process $Z$ that deviates from the linear growth of Mean Squared Displacement (MSD) over time that is  $$\mathbf{E}_x[(Z_t-x)^2] \sim  t^\beta$$ with $\beta\neq1.$
When $\beta=1$ we refer to a  a diffusion. In the case $0 < \beta < 1$ we have a sub-diffusive behaviour which may occur in systems with \lq\lq trapping" mechanisms or molecular crowding, often modeled by the Continuous Time Random Walk (CTRW) with heavy-tailed waiting time distributions.

We point out that also the process $X^+$ on Sierpinski gasket fractals $SG(2)$ has a sub-diffusive behaviour.
We recall the construction of  Sierpinski gasket $SG(2)$
Let $G_0 =\{a_0, a_1, a_2\} $  with $a_0=(0, 0),$ $ a_1=(1, 0)$ and $ a_2= (\frac12,\frac{\sqrt{3}}2)$   be the vertices of the unit
 triangle in $\R^2.$ 
The Sierpinski gasket  $SG(2)$ can be obtained from the general  theory of self-similar set as 
the unique compact set invariant with respect to the set of  the $3$ following contractive similarities with contraction factor $\frac12$
$$\psi_i(x)=a_i +\frac12(x-a_i), \quad\quad i=0,1, 2$$ 
that is $$SG(2)= \bigcup_{i_1,...,i_n=1}^4 \psi_i (SG(2)).$$

By Corollary 2.25 in \cite{BAR} we have on  $SG(2)$ that  $$\mathbf{E}_x[(X^+_t-x)^2] \sim  t^\beta,$$  with $\beta=2/d_{w_2}$ where the  walk dimension $d_{w_2}$  is   equal to  $\frac{ \log 5}{ \log 2}$
(we recall that  the  walk dimension relates mean exit times from balls with their radii,  that is, $$\mathbf{E}_x[T_B(X^+)] \sim \rho^{d_w}$$ (see \cite{BAR})).

As $2/d_{w_2} = \log 4/ \log 5 < 1$ we  conclude that the process $X^+$ on Sierpinski gasket fractals $SG(2)$ has a sub-diffusive behaviour.

In a similar manner, we consider $SG(3)$ a variant of  $SG(2).$ 
The set  $SG(3)$ that can be constructed 
by the same procedure seen before  by subdividing now  the
triangle  into $9$  smaller triangles, and deleting the  3 \lq\lq downward
facing" ones. More precisely, the construction of the variant Sierpinski gasket  $SG(3)$ can be obtained  by starting again from the triangle $G_0.$    Let $\{c_0, c_1\}$ be the points that divide the side with vertices $\{a_0, a_1\}$   of $G_0$ in third equal parts,  that is    $c_0=(\frac13, 0)$ $c_1=(\frac23, 0);$ 
let $\{c_2, c_3\}$ be the points that divide the side with vertices $\{a_1, a_2\}$   of $G_0$  in third equal parts, that is    $c_3=(\frac23,\sqrt{3}/3 )$ $c_2=(5/6,  \sqrt{3}/6),$ 
 let $\{c_4, c_5\}$ be the points that divide the side with vertices $\{a_0, a_2\}$   of $G_0$ in third equal parts, that is    $c_5=(\frac16,\sqrt{3}/6 )$ $c_4=(1/3,  \sqrt{3}/3).$ Moreover we consider the point $c_6=(1/2,  \sqrt{3}/6).$
 Let $A_1$ be the interior of the triangle with vertices $\{c_0, c_6, c_5\};$  let $A_2$ be the interior of the triangle with vertices $\{c_1, c_2, c_6\}$ and
  let $A_3$ be the interior of the triangle with vertices $\{c_6, c_3, c_4\}.$  Let  $H_1 = H_0 \setminus  (A_1\cup A_2\cup A_3),$ so $H_1$ consists of $6$ closed upward facing triangles, each of side $\frac13.$
Now repeat the operation on each of these triangles to obtain the set $H_2$ consisting of $36$ upward
 closed upward facing triangles, each of side $\frac19.$
 Continuing in this way, we obtain a decreasing sequence of closed non-empty
sets $H_n$, and set
$SG(3)=\cap^{\infty}_{n=0} H_n.$
Moreover, also the Sierpinski gasket $SG(3)$ can be defined as the unique compact set invariant with respect to the set of the following $N=6$ contractive similarities with contraction factor $\frac13:$
$$\varphi_i(x)=a_i +\frac13(x-a_i), \quad\quad i=0,1,2, $$
$$ \varphi_3(x)=\frac13 x +c_0,  \varphi_4(x)=\frac13 x +c_6 , \varphi_5(x)=\frac13 x +c_5.$$ 

Also on $SG(3)$ we have  a sub-diffusive behaviour, that is,    $$\mathbf{E}_x[(X^+_t-x)^2] \sim  t^\beta$$  $\beta=2/d_{w_3}$ with walk dimension   
$d_{w_3}= \frac{\log(\frac{90}7)}{\log 3}.$

We observe that on these fractals,    (see, for example \cite[Example 6.4]{CapDovDelRus})  the time changed process $X^+ \circ L$ (with  $\Phi(\lambda) = \lambda^{\beta}$) exhibits a sub-diffusive behaviour for any value of $\beta\in(0,1)$ since (see, for example \cite[Example 6.4]{CapDovDelRus})
\begin{align*}
\mathbf{E}_0[(X^+ \circ L_t)^2] = \mathbf{E}[(L_t)^{2/d_{w}}] \sim t^ {2\beta / d_{w}}
\end{align*}
(where $d_w$ is the walk dimension).

Here we recall the result obtained in \cite{RCAMS} where the connection between complex structures and non-local operators has been obtained for variants of the Sierpinski gasket.   
In particular, it is proved that the  same on-diagonal heat kernel estimate \begin{equation}\label{hkd} q(t, x,x) \simeq t^{-(\frac{\log 6}{\log 3})/\frac{\log(\frac{90}7)}{\log 3}} \end{equation}\
can be related  either to a time fractional diffusion on Sierpinski gasket  $SG(2)$ with $$\beta =\frac{1-\frac{\log 6}{\log(\frac{90}7)}}{1-\frac{\log3}{\log5}}$$ 
 or  a  diffusion on the  variant of Sierpinski gasket $SG(3)$.

 In particular, this  underlines the link between the features of the medium and the anomalous diffusions which are well-described by time changed processes
and it shows as the sub-diffusive behavior of the process on  $SG(3)$  can be related to the sub-diffusive behavior  ot the change timed process on the \textit{simpler}  Sierpinki gasket $SG(2).$
We remark that the integral on the domain of  on-diagonal heat kernel gives the so-called heat trace or partition function (see \cite{Vdb87}) where the walk dimension $d_w$ is needed to characterize the log-periodic oscillations appearing in the fractal case (see \cite{DUNNE}): a more detailed focus could be performed by using variable order fractional operator.

\section{Conclusion}

Our tools provide a deep investigation  on trap behaviour which surprising conclusions. For example, for the Koch snowflakes $\Omega_\mathsf{K}$ we have that:
\begin{itemize}
\item[i)] $\Omega_\mathsf{K}$ is non trap for $X^+$ according to Definition \eqref{defTrapd},
\item[ii)] $X^+$ does not exhibit trap behaviour on $\Omega_\mathsf{K}$ according to Definition \eqref{defTrapb},
\item[iii)] $X^+$ exhibits trap behaviour on $\Omega_\mathsf{K}$ according to Definition \eqref{defTraphc}.
\end{itemize}
Thus, the arguments under investigation provide qualitatively different information. For example, we have that:
\begin{itemize}
\item[i)] $X^+$ on $\mathsf{SG}$ exhibits a sub-diffusive behaviour and does not exhibit trap behaviour according to Definition \ref{defTrapb}. 
\item[ii)] $\bar{X}$ on $\overline{D}$ exhibits trap behaviour according to Definition \ref{defTrapb} and does not exhibit sub-diffusive behaviour on $D$.
\item[iii)] $\bar{X}$ on $\mathsf{G}$ exhibits trap behaviour according to Definition \ref{defTrapb} and does not exhibit sub-diffusive behaviour on $\mathsf{G} \setminus \mathcal{V}$.
\end{itemize}

Finally, we underline that:
\begin{itemize}
\item[i)] $X^L$ has always sub-diffusive and trap behaviours. 
\end{itemize}

These tools can be have  significant practical applications in characterizing irregular media.
In particular, it could be useful for non-destructive thermal diagnostics. In fact, 
in the field of material science, the structural integrity of porous thermal barriers can be assessed by observing surface cooling curves. By fitting experimental data to a fractional model on a simple geometry, the value of $\beta$ allows for the identification of internal fractal dimensions without invasive imaging. 
An other application could be electrochemical energy storage.
The diffusion of ions in fractal electrodes follows a similar mathematical structure to heat flow. The  fractional model can simulate the impedance of fractal electrodes using smooth-geometry models, significantly reducing computational cost while maintaining accuracy regarding the electrode's surface roughness \cite{pajkossy}.
Moreover, fractal structures are ubiquitous in biological systems, such as the pulmonary tree and vascular networks. Modeling these structures  as more regular domains with fractional suitable diffusions provides a  framework for interpreting and approximating  the process on these   (see \cite{magin}).

 \section*{Acknowledgments}

The authors are supported by INdAM-GNAMPA and Sapienza University of Rome. 

The authors thank MUR for the support under the project PRIN 2022 - 2022XZSAFN: Anomalous Phenomena on Regular and Irregular Domains: Approximating Complexity for the Applied Sciences - CUP B53D23009540006 - PNRR M4.C2.1.1

\small{

}

\end{document}